\newcommand{\complex}{\mathbb{C}}
\newcommand{\para}[1]{\left(#1\right)}
\newcommand{\paraa}[1]{\big(#1\big)}
\newcommand{\spacearound}[1]{\quad#1\quad}
\newcommand{\equivalent}{\spacearound{\Leftrightarrow}}
\renewcommand{\implies}{\spacearound{\Rightarrow}}
\newtheorem{theorem}{Theorem}[section]
\newtheorem{lemma}[theorem]{Lemma}
\newtheorem{proposition}[theorem]{Proposition}
\newtheorem{example}[theorem]{Example}
\theoremstyle{definition}
\newtheorem{definition}[theorem]{Definition}
\theoremstyle{remark}
\numberwithin{equation}{section}
\title{On moduli spaces of K\"ahler-Poisson algebras over rational
  functions in two variables}
\author{Ahmed Al-Shujary}
\address[Ahmed Al-Shujary]{Dept. of Math.\\
Link\"oping University\\
581 83 Link\"oping\\
Sweden}
\email{ahmed.al-shujary@liu.se}
\thanks{}
\subjclass[2010]{17B63}
\keywords{}
\begin{document}

\maketitle
\begin{abstract}
  K\"ahler-Poisson algebras were introduced as algebraic analogues of
  function algebras on K\"ahler manifolds, and it turns out that one
  can develop geometry for these algebras in a purely algebraic way. A
  K\"ahler-Poisson algebra consists of a Poisson algebra together with
  the choice of a metric structure, and a natural question arises: For
  a given Poisson algebra, how many different metric structures are
  there, such that the resulting K\"ahler-Poisson algebras are
  non-isomorphic? In this paper we initiate a study of such moduli
  spaces of K\"ahler-Poisson algebras defined over rational functions
  in two variables.
\end{abstract}

\section{Introduction}

\noindent
In \cite{algebras} we initiated the study of K\"ahler-Poisson algebras
as algebraic analogues of algebras of functions on K\"ahler
manifolds.  K\"ahler-Poisson algebras consist of a Poisson algebra
together with a metric structure. This study was motivated by the
results in \cite{Pseudo-Riemannian,multi-linear}, where many aspects
of the differential geometry of an embedded almost K\"ahler manifold
$\Sigma$ can be formulated in terms of the Poisson structure of the
algebra of functions of $\Sigma$. In \cite{algebras} we showed that
``the K\"ahler--Poisson condition'', being the crucial identity in the
definition of K\"ahler-Poisson algebras, allows for an identification
of geometric objects in the Poisson algebra which share important
properties with their classical counterparts. For instance, we proved
the existence of a unique Levi-Civita connection on the module
generated by the inner derivations of the K\"ahler-Poisson algebra,
and that the curvature operator has all the classical symmetries. In
\cite{morphism} we explore further algebraic properties of
K\"ahler-Poisson algebras. In particular, we find appropriate
definitions of morphisms of K\"ahler-Poisson algebras as well as
subalgebras, direct sums and tensor products.

Starting from a Poisson algebra $\mathcal{A}$, it is interesting to
ask the following question: How many non-isomorphic K\"ahler-Poisson
algebras can one construct from $\mathcal{A}$?  This amounts to the
study of a ``moduli space'' for K\"ahler-Poisson algebras, in analogy
with the corresponding problem for Riemannian manifolds, where one
consider metrics giving rise to non-isometric Riemannian manifolds. In
general, this is a hard problem and, in this note, we will focus on
approaching this problem in a particular setting. To this end, we
start from the polynomial algebra $\complex[x,y]$. In order to
understand isomorphism classes of K\"ahler-Poisson algebras based on
this algebra, one needs to study automorphisms of $\complex[x,y]$. In
\cite{uber ganze} Jung shows that every automorphism of
$\complex[x,y]$ is a composition of so called elementary
automorphisms. This result was later extended to fields of arbitrary
characteristics and free associative algebras
(\cite{Czerniakiewicz,Dicks,Makar,Nagata,van der}). In the notation of
\cite{automorphism} (which also provides an elementary proof of the
automorphism theorem), every $k$-algebra automorphism of $k[x,y]$ is a
finite composition of automorphisms of the type:
\begin{enumerate}
\item $x\mapsto x $, $y\mapsto y+h(x)$ with $h(x)\in k[x]$ 
\item $x\mapsto a_{11}x+a_{12}y+a_{13}$,
  $y\mapsto a_{21}x+a_{22}y+a_{23}$ with
  $a_{11}a_{22}\neq a_{21}a_{12}$
\end{enumerate}
for $a_{ij}\in k$. Using these automorphisms, we shall initiate a
study of isomorphism classes of K\"ahler-Poisson algebras over
$\complex[x,y]$. However, as shown in \cite{algebras}, the
construction of a K\"ahler-Poisson algebra over $\complex[x,y]$ will
in most cases involve a localization of the algebra. Therefore, we
shall rather start from $\complex(x,y)$, the algebra of rational
functions of two variables, together with an appropriate linear
Poisson structure. Although we do not solve the problem in its full
generality, the classification results for certain classes of metrics
obtained below, give an insight into the complexity of the general
problem.

\section{K\"ahler-poisson algebras}

\noindent
We begin this section by recalling the main object of our
investigation. Let us consider a Poisson algebra
$(\mathcal{A},\{\cdot,\cdot\})$ over $\mathbb{C}$ and let
$\{x^1,...,x^m\}$ be a set of distinguished elements of
$\mathcal{A}$. These elements play the role of functions providing an
isometric embedding for K\"ahler manifolds
(cf. \cite{multi-linear}). Let us recall the definition of
K\"ahler-Poisson algebras together with a few basic results
(cf. \cite{algebras}).

\begin{definition} \label{def1}
  Let $(\mathcal{A},\{\cdot,\cdot\})$ be a Poisson algebra over
  $\mathbb{C}$ and let ${x^1,...,x^m} \in \mathcal{A}$. Given a
  symmetric $m \times m$ matrix $g=(g_{ij})$ with entries
  $g_{ij}\in\mathcal{A}$, for $i,j=1,...,m$, we say that the triple
  $\mathcal{K}=(\mathcal{A},g,\{x^1,...,x^m\})$ is a K\"ahler--Poisson
  algebra if there exists $\eta\in\mathcal{A}$ such
  that
  \begin{equation} \label{eq:2.1}
    \sum\limits_{i,j,k,l=1}^m
    \eta\{a,x^i\}g_{ij}\{x^j,x^k\}g_{kl}\{x^l,b\}=
    -\{a,b\} \end{equation} for all $a,b$ $\in$ $\mathcal{A}$. We call
  equation \eqref{eq:2.1} \emph{"the K\"ahler--Poisson condition" }.
\end{definition}

Given a K\"ahler-Poisson algebra
$\mathcal{K}=(\mathcal{A},g,\{x^1,...,x^m\})$, let $\mathfrak{g}$
denote the $\mathcal{A}$-module generated by all
$\emph{inner derivations}$ , i.e.
\begin{center}  
  $\mathfrak{g}=\{a_1\{c^1,\cdot\}+...+a_N\{c^N,\cdot\}:$
  $a_i,c^i \in \mathcal{A}$ and $N\in \mathbb{N}\}$.
\end{center}
It is a standard fact that $\mathfrak{g}$ is a Lie algebra with
respect to the bracket
\begin{center}
  $[\alpha,\beta](a)=\alpha(\beta(a))-\beta(\alpha(a))$,
\end{center}
where $\alpha,\beta \in \mathfrak{g}$ and $a \in \mathcal{A}$ (see
e.g. \cite{helgason}).

Moreover, it was shown in \cite{algebras} that $\mathfrak{g}$ is a
projective module and that every K\"ahler--Poisson algebra is a
Lie-Rinehart algebra. For more details on Lie-Rinehart algebras, we
refer to \cite{h:extensions.rinehart,rinehart}. It was also proven
that the matrix $g$ defines a metric (in the context of metric
Lie-Rinehart algebras \cite{algebras}) on $\mathfrak{g}$
via \begin{center}
  $g(\alpha,\beta)=\sum\limits_{i,j=1}^{m}\alpha(x^i)g_{ij}\beta(x^j)$,
\end{center}
for $\alpha,\beta \in \mathfrak{g}$.  Denoting by $\mathcal{P}$
the matrix with entries $\mathcal{P}^{ij}=\{x^i,x^j\}$, the
K\"ahler-Poisson condition \eqref{eq:2.1} 
can be written in matrix notation as
\begin{center}
  $\eta\mathcal{P}g\mathcal{P}g\mathcal{P}=-\mathcal{P}$,
\end{center}
if the algebra $\mathcal{A}$ is generated by $\{x^1,...,x^m\}$. Let us
now recall the concept of a morphism of K\"ahler-Poisson algebras
\cite{thesis,morphism}, starting from the following definition.

\begin{definition}
  For a K\"ahler-Poisson algebra
  $\mathcal{K}=(\mathcal{A},g,\{x^1,...,x^m\})$, let
  $\mathcal{A}_{\text{fin}} \subseteq \mathcal{A}$ denote the subalgebra
  generated by $\{x^1,...,x^m\}$.
\end{definition}

\noindent
Clearly, if $\mathcal{A}$ is generated by $\{x^1,\ldots,x^m\}$, which
is often the case in particular examples, then
$\mathcal{A}_{\text{fin}}=\mathcal{A}$. Note that
$\mathcal{A}_{\text{fin}}$ is not necessarily a Poisson subalgebra of
$\mathcal{A}$ in the general case.

\begin{definition}\label{definition}
  Let $\mathcal{K}=(\mathcal{A},g,\{x^1,...,x^m\})$ and
  $\mathcal{K}^{\prime}=(\mathcal{A}^{\prime},g^{\prime},\{y^1,...,{y^m}^{\prime}\})$
  be K\"ahler-Poisson algebras together with their modules of inner
  derivations $\mathfrak{g}$ and $\mathfrak{g}^{\prime}$,
  respectively.  A morphism of K\"ahler-Poisson algebras is a pair of
  maps $(\phi,\psi)$, with
  $\phi:\mathcal{A}\rightarrow \mathcal{A}^{\prime}$ a Poisson algebra
  homomorphism and
  $\psi:\mathfrak{g}\rightarrow \mathfrak{g}^{\prime}$ a Lie algebra
  homomorphism, such that
  
  \begin{enumerate}
  \item \label{1}$\psi(a\alpha)=\phi(a)\psi(\alpha)$,
  \item \label{2}$\phi(\alpha(a))=\psi(\alpha)(\phi(a))$,
  \item \label{3}$\phi(g(\alpha,\beta))=g^{\prime}(\psi(\alpha),\psi(\beta))$,
  \item \label{4}$\phi(\mathcal{A}_\text{fin})\subseteq \mathcal{A}^{\prime}_{\text{fin}} $,
  \end{enumerate}
  for all $a \in \mathcal{A}$ and $\alpha,\beta \in \mathfrak{g}$.  
\end{definition}

\noindent
Let us also recall the following result from \cite{thesis,morphism}, where a
condition for two K\"ahler-Poisson algebras to be isomorphic is
formulated. In this paper we shall repeatedly make use of this result to
understand when two K\"ahler-Poisson algebras are isomorphic for
different choices of metrics.

\begin{proposition}\label{proposition2.2}
  Let $\mathcal{K}=(\mathcal{A},g,\{x^1,...,x^m\})$ and
  $\mathcal{K}^{\prime}=(\mathcal{A}^{\prime},g^{\prime},\{y^1,...,{y^m}^{\prime}\})$
  be K\"ahler-Poisson algebras. Then $\mathcal{K}$ and
  $\mathcal{K}^{\prime}$ are isomorphic if and only if there exists a
  Poisson algebra isomorphism
  $\phi:\mathcal{A}\rightarrow \mathcal{A}^{\prime}$ such that
  $\phi(\mathcal{A}_\textnormal{fin})= \mathcal{A}^{\prime}_\textnormal{fin} $,
  and
  \begin{align}
    \label{properties of isomorphisms}
    \mathcal{P}^{\prime}g^{\prime}\mathcal{P}^{\prime}&=\mathcal{P}^{\prime}A^T\phi(g)A\mathcal{P}^{\prime},
  \end{align} 
  where ${A^i}_\alpha=\frac{\partial \phi(x^i)}{\partial y^\alpha}$ and  $(\mathcal{P}^{\prime})^{\alpha\beta}=\{y^\alpha,y^\beta\}^{\prime}$.
\end{proposition}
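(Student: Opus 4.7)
The plan is to prove the two implications by exploiting the observation that condition (2) of Definition \ref{definition} already forces the Lie algebra map $\psi$ to be determined by the Poisson isomorphism $\phi$; hence the full data of a K\"ahler--Poisson isomorphism reduces to $\phi$ together with a metric compatibility, which is precisely the matrix identity.

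For the forward direction, assume $(\phi,\psi)$ is a K\"ahler--Poisson isomorphism. I would first check that $\psi(\{a,\cdot\}) = \{\phi(a),\cdot\}'$: condition (2) with $\alpha = \{a,\cdot\}$ gives $\psi(\{a,\cdot\})(\phi(b)) = \phi(\{a,b\}) = \{\phi(a),\phi(b)\}'$, and surjectivity of $\phi$ identifies $\psi(\{a,\cdot\})$ with $\{\phi(a),\cdot\}'$. The equality $\phi(\mathcal{A}_{\text{fin}}) = \mathcal{A}'_{\text{fin}}$ follows from condition (4) applied to both $(\phi,\psi)$ and its inverse. To extract the matrix identity I would then apply condition (3) with $\alpha = \{a,\cdot\}, \beta = \{b,\cdot\}$, expand
\[
\phi(g(\alpha,\beta)) = \sum_{i,j}\{\phi(a),\phi(x^i)\}'\phi(g_{ij})\{\phi(b),\phi(x^j)\}',
\]
apply the chain rule $\{\phi(a),\phi(x^i)\}' = \sum_\gamma \{\phi(a),y^\gamma\}' A^i_\gamma$ in each factor, and specialize to $\phi(a) = y^\sigma$, $\phi(b) = y^\tau$; antisymmetry of $\mathcal{P}'$ then converts the resulting equality into $\mathcal{P}' g' \mathcal{P}' = \mathcal{P}' A^T\phi(g) A\,\mathcal{P}'$.

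For the converse, given $\phi$ with the stated properties I would define $\psi$ by $\psi\bigl(\sum_i a_i\{c^i,\cdot\}\bigr) := \sum_i \phi(a_i)\{\phi(c^i),\cdot\}'$. The first check is well-definedness: if $\sum_i a_i\{c^i,b\} = 0$ for all $b \in \mathcal{A}$, then applying $\phi$ and using surjectivity shows $\sum_i \phi(a_i)\{\phi(c^i),b'\}' = 0$ for all $b' \in \mathcal{A}'$. Conditions (1), (2), (4), and the Lie homomorphism property of $\psi$ (via the Jacobi identity and $\phi$ being a Poisson map) are then immediate from the construction. For condition (3), I reverse the forward computation: using the chain rule, both $\phi(g(\{a,\cdot\},\{b,\cdot\}))$ and $g'(\psi(\{a,\cdot\}),\psi(\{b,\cdot\}))$ can be written as $u_a^T\,\mathcal{P}' M \mathcal{P}'\,u_b$ with $u_a = (\partial\phi(a)/\partial y^\gamma)_\gamma$ and $M$ being $A^T \phi(g) A$ or $g'$ respectively; the matrix identity yields equality, and $\mathcal{A}$-bilinearity of $g$ extends condition (3) from these generators to arbitrary inner derivations.

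The main obstacle is the careful bookkeeping of indices together with the systematic use of the chain rule for Poisson brackets, which rests on the fact that elements of $\mathcal{A}'$ are expressible as rational functions in $y^1,\ldots,y^{m'}$ and hence admit well-defined partial derivatives. A related subtlety is recognising that condition (3) translates to the \emph{sandwiched} identity $\mathcal{P}' A^T\phi(g)A\,\mathcal{P}' = \mathcal{P}' g' \mathcal{P}'$ rather than the naive $A^T\phi(g)A = g'$: the extra factors of $\mathcal{P}'$ arise from expressing inner-derivation evaluations in terms of gradient vectors, and this sandwiched form is precisely what captures the metric content of a K\"ahler--Poisson morphism.
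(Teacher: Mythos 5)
Note first that the paper does not actually prove Proposition~\ref{proposition2.2}: it is recalled from \cite{thesis,morphism} without proof, so there is no in-paper argument to compare yours against. Judged on its own merits, your reconstruction is sound and follows the natural route. The forward direction is correct: condition (\ref{2}) plus surjectivity of $\phi$ does force $\psi(\{a,\cdot\})=\{\phi(a),\cdot\}^{\prime}$, condition (\ref{4}) applied to the morphism and its inverse gives $\phi(\mathcal{A}_{\text{fin}})=\mathcal{A}^{\prime}_{\text{fin}}$, and specializing condition (\ref{3}) to generators with $\phi(a)=y^\sigma$, $\phi(b)=y^\tau$ after the chain rule on $\phi(x^i)\in\mathcal{A}^{\prime}_{\text{fin}}$ yields exactly the sandwiched identity \eqref{properties of isomorphisms}. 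The converse, with $\psi$ defined on generators by $a\{c,\cdot\}\mapsto\phi(a)\{\phi(c),\cdot\}^{\prime}$ and well-definedness checked via surjectivity, is also the right construction.

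The one point that deserves more than a passing remark is the step you flag at the end: to go from the sandwiched identity back to condition (\ref{3}) for \emph{arbitrary} inner derivations, you need every vector $u_{a}=(\{\phi(a),y^\gamma\}^{\prime})_\gamma$ to lie in the $\mathcal{A}^{\prime}$-module spanned by the rows of $\mathcal{P}^{\prime}$, i.e.\ you need the chain rule $\{\phi(a),y^\gamma\}^{\prime}=\sum_\delta(\partial\phi(a)/\partial y^\delta)\{y^\delta,y^\gamma\}^{\prime}$ for all $a$, not just for $a\in\mathcal{A}_{\text{fin}}$. For a completely general Poisson algebra $\mathcal{A}^{\prime}$ this is not automatic and would leave a gap; in the setting where the proposition is actually used in this paper ($\mathcal{A}^{\prime}=\complex(x,y)$ generated as a field by $x,y$, with $\mathcal{P}^{\prime}$ invertible so that \eqref{properties of isomorphisms} even collapses to $g^{\prime}=A^T\phi(g)A$) it holds, and you state this hypothesis explicitly. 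So your proof is correct under the assumption you name; just be aware that this assumption is doing real work and should be promoted from a closing remark to an explicit standing hypothesis if the argument is meant to cover the general statement.
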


\noindent
In what follows, the matrix $\mathcal{P}^\prime$ will be invertible,
implying that \eqref{properties of isomorphisms} is equivalent to
$g^\prime=A^T\phi(g)A$. We shall also need the following result
\cite{thesis,morphism}.

\begin{proposition}
	\label{proposition2.5}
	Let $\mathcal{K}=(\mathcal{A},g,\{x^1,...,x^m\})$ and $\mathcal{K}^{\prime}=(\mathcal{A}^{\prime},g^{\prime},\{y^1,...,{y^m}^{\prime}\})$ be  K\"ahler-Poisson algebras and let $(\phi,\psi):\mathcal{K}\rightarrow \mathcal{K}^{\prime}$ be an isomorphism of  K\"ahler-Poisson algebras.
	If 
	\begin{center}
		$\eta\mathcal{P}g\mathcal{P}g\mathcal{P}=-\mathcal{P}$ and $\medskip$  $\eta^{\prime}\mathcal{P}^{\prime}g^{\prime}\mathcal{P}^{\prime}g^{\prime}\mathcal{P}^{\prime}=-\mathcal{P}^{\prime}$ 
	\end{center}
	then $(\phi(\eta)-\eta^{\prime})\mathcal{P}^\prime=0$.
\end{proposition}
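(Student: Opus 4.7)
The plan is to apply $\phi$ to the K\"ahler-Poisson condition \eqref{eq:2.1} for $\mathcal{K}$, use condition~(4) of the morphism definition together with Proposition~\ref{proposition2.2} to rewrite the resulting identity as a matrix equation of the same shape as the K\"ahler-Poisson condition for $\mathcal{K}^{\prime}$, and then compare the two.

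Since $\phi$ is an isomorphism, for each $\mu,\nu\in\{1,\ldots,m\}$ one can pick $a,b\in\mathcal{A}$ with $\phi(a)=y^\mu$ and $\phi(b)=y^\nu$. Applying $\phi$ to \eqref{eq:2.1} for these choices and using that $\phi$ is a Poisson homomorphism produces
\[
\phi(\eta)\sum_{i,j,k,l}\{y^\mu,\phi(x^i)\}^{\prime}\phi(g_{ij})\{\phi(x^j),\phi(x^k)\}^{\prime}\phi(g_{kl})\{\phi(x^l),y^\nu\}^{\prime}=-(\mathcal{P}^{\prime})^{\mu\nu}.
\]
Because $\phi(\mathcal{A}_{\text{fin}})\subseteq \mathcal{A}^{\prime}_{\text{fin}}$, each $\phi(x^i)$ lies in the subalgebra generated by the $y^\alpha$, so the Leibniz rule gives $\{y^\mu,\phi(x^i)\}^{\prime}=(\mathcal{P}^{\prime}A^T)^{\mu i}$, $\{\phi(x^j),\phi(x^k)\}^{\prime}=(A\mathcal{P}^{\prime}A^T)^{jk}$ and $\{\phi(x^l),y^\nu\}^{\prime}=(A\mathcal{P}^{\prime})^{l\nu}$. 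Abbreviating $\tilde{g}=A^T\phi(g)A$ and carrying out the matrix multiplication, the displayed identity collapses to
\[
\phi(\eta)\,\mathcal{P}^{\prime}\tilde{g}\mathcal{P}^{\prime}\tilde{g}\mathcal{P}^{\prime}=-\mathcal{P}^{\prime}
\]
as an equation of matrices over $\mathcal{A}^{\prime}$.

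Proposition~\ref{proposition2.2} supplies $\mathcal{P}^{\prime}\tilde{g}\mathcal{P}^{\prime}=\mathcal{P}^{\prime}g^{\prime}\mathcal{P}^{\prime}$, and a one-line telescoping then yields $\mathcal{P}^{\prime}\tilde{g}\mathcal{P}^{\prime}\tilde{g}\mathcal{P}^{\prime}=\mathcal{P}^{\prime}g^{\prime}\mathcal{P}^{\prime}g^{\prime}\mathcal{P}^{\prime}$, so that $\phi(\eta)\mathcal{P}^{\prime}g^{\prime}\mathcal{P}^{\prime}g^{\prime}\mathcal{P}^{\prime}=-\mathcal{P}^{\prime}$. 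Multiplying this by $\eta^{\prime}$ and using the K\"ahler-Poisson condition $\eta^{\prime}\mathcal{P}^{\prime}g^{\prime}\mathcal{P}^{\prime}g^{\prime}\mathcal{P}^{\prime}=-\mathcal{P}^{\prime}$ for $\mathcal{K}^{\prime}$ to simplify the left-hand side gives $-\phi(\eta)\mathcal{P}^{\prime}=-\eta^{\prime}\mathcal{P}^{\prime}$, that is, $(\phi(\eta)-\eta^{\prime})\mathcal{P}^{\prime}=0$.

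The delicate step is orienting the Leibniz expansion so that the final matrix product has $\mathcal{P}^{\prime}$ as its leftmost and rightmost factor (around the block $\tilde{g}\mathcal{P}^{\prime}\tilde{g}$): only in this form does Proposition~\ref{proposition2.2} apply to swap $\tilde{g}$ for $g^{\prime}$, after which the K\"ahler-Poisson condition for $\mathcal{K}^{\prime}$ cleanly cancels the middle triple $\mathcal{P}^{\prime}g^{\prime}\mathcal{P}^{\prime}g^{\prime}\mathcal{P}^{\prime}$ and leaves $\mathcal{P}^{\prime}$ standing on its own. Note that the argument uses surjectivity of $\phi$ to reach the distinguished generators $y^\mu,y^\nu$ on the right, but nowhere requires $\mathcal{P}^{\prime}$ itself to be invertible.
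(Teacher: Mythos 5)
The paper states Proposition~\ref{proposition2.5} without proof, deferring to \cite{thesis,morphism}, so there is no in-document argument to compare against; judged on its own, your proof is correct and is the natural one. The identities $\{y^\mu,\phi(x^i)\}^{\prime}=(\mathcal{P}^{\prime}A^T)^{\mu i}$, $\{\phi(x^j),\phi(x^k)\}^{\prime}=(A\mathcal{P}^{\prime}A^T)^{jk}$ and $\{\phi(x^l),y^\nu\}^{\prime}=(A\mathcal{P}^{\prime})^{l\nu}$ check out, the contraction does give $\phi(\eta)\mathcal{P}^{\prime}\tilde{g}\mathcal{P}^{\prime}\tilde{g}\mathcal{P}^{\prime}=-\mathcal{P}^{\prime}$, the telescoping via $\mathcal{P}^{\prime}\tilde{g}\mathcal{P}^{\prime}=\mathcal{P}^{\prime}g^{\prime}\mathcal{P}^{\prime}$ is valid, and multiplying by $\eta^{\prime}$ and using commutativity closes the argument. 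Two reading conventions are worth making explicit, though neither is a gap: first, you apply \eqref{eq:2.1} to arbitrary $a,b$ with $\phi(a)=y^\mu$, $\phi(b)=y^\nu$, whereas the hypothesis of the proposition is only the matrix form of the K\"ahler--Poisson condition (i.e.\ \eqref{eq:2.1} evaluated on the generators), so you are tacitly taking $\eta$ to be the element of Definition~\ref{def1}, for which \eqref{eq:2.1} holds for all $a,b$ --- clearly the intended reading; second, invoking \eqref{properties of isomorphisms} for the given $\phi$ uses the ``only if'' direction of Proposition~\ref{proposition2.2} applied to that specific isomorphism, not merely the existence of some isomorphism satisfying it, which is also how the paper itself uses that result.
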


\noindent
Note that, in the current situation, Proposition \ref{proposition2.5}
implies that $\phi(\eta)=\eta^{\prime}$ since $\mathcal{P}^{\prime}$
is invertible.
\section{K\"ahler-poisson algebras over rational functions}

\noindent
Let us start by considering $\complex[x,y]$ together with a (non-zero)
linear Poisson structure; i.e a Poisson bracket determined by
\begin{align*}
  \{x,y\} = \lambda x + \mu y
\end{align*}
for $\lambda,\mu\in\complex$ such that at least one of $\lambda,\mu$
is non-zero (note that the Jacobi identity is satisfied for all
choices of $\lambda$ and $\mu$). The corresponding Poisson algebras
are isomorphic for different choices of $\lambda$ and $\mu$, and for
definiteness we shall choose a particular presentation.
\begin{proposition}
  Let $\mathcal{A}_1 = (\complex[x,y],\{\cdot,\cdot\}_1)$ denote the
  Poisson algebra defined by $\{x,y\}_1=\lambda x+\mu y$ for
  $\lambda,\mu\in\complex$ such that $\{x,y\}_1\neq 0$. Then
  $\mathcal{A}_1$ is isomorphic to the Poisson algebra
  $\mathcal{A}=(\complex[x,y],\{\cdot,\cdot\})$ with $\{x,y\}=x$.
\end{proposition}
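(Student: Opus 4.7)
The plan is to construct an explicit Poisson isomorphism by means of a linear change of generators. Since the bracket $\{x,y\}_1 = \lambda x + \mu y$ is linear in the generators and $(\lambda,\mu)\neq (0,0)$, I would look for new generators $u,v \in \complex[x,y]$ obtained by a $\complex$-linear combination of $x$ and $y$ such that $\{u,v\}_1 = u$.

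Concretely, set $u = \lambda x + \mu y$ and $v = \alpha x + \beta y$ with $\alpha, \beta \in \complex$ to be chosen. A direct bilinearity computation gives
\begin{equation*}
\{u,v\}_1 = (\lambda\beta - \mu\alpha)\{x,y\}_1 = (\lambda\beta - \mu\alpha)(\lambda x + \mu y) = (\lambda\beta-\mu\alpha)\,u.
\end{equation*}
Because the row $(\lambda, \mu)$ is nonzero, the linear equation $\lambda \beta - \mu\alpha = 1$ in the unknowns $(\alpha,\beta)$ always admits a solution (for instance $(\alpha,\beta)=(0,1/\lambda)$ if $\lambda\neq 0$, and $(\alpha,\beta)=(-1/\mu,0)$ otherwise). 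Fix such a choice; then $\{u,v\}_1 = u$ and the linear change of generators $(x,y)\mapsto (u,v)$ has determinant $\lambda\beta-\mu\alpha = 1 \neq 0$, so it is invertible.

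Finally, define $\phi\colon \mathcal{A} \to \mathcal{A}_1$ as the unique $\complex$-algebra homomorphism with $\phi(x) = u$ and $\phi(y) = v$. By the previous paragraph $\phi$ is an algebra isomorphism (its inverse is the linear substitution given by the inverse matrix), and
\begin{equation*}
\{\phi(x),\phi(y)\}_1 = \{u,v\}_1 = u = \phi(x) = \phi(\{x,y\}),
\end{equation*}
which, since $\{\cdot,\cdot\}$ is determined on $\complex[x,y]$ by its value on generators via the Leibniz rule, upgrades to $\phi(\{a,b\}) = \{\phi(a),\phi(b)\}_1$ for all $a,b\in\mathcal{A}$. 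Hence $\phi$ is a Poisson isomorphism. There is no real obstacle here: the only subtlety is to treat the two cases $\lambda\neq 0$ and $\lambda=0$ uniformly, which is handled automatically by the observation that the linear equation $\lambda\beta-\mu\alpha=1$ is solvable as soon as $(\lambda,\mu)\neq(0,0)$.
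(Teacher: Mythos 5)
Your proof is correct and takes essentially the same approach as the paper: an explicit linear change of generators realizing a Poisson isomorphism between $\mathcal{A}$ and $\mathcal{A}_1$. Your specific choice $u=\{x,y\}_1=\lambda x+\mu y$ is a slightly slicker parametrization that reduces everything to solving $\lambda\beta-\mu\alpha=1$ and thereby avoids the case split on $\lambda=0$ versus $\lambda\neq 0$ that the paper carries out.
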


\begin{proof}
  We will show that for every choice of $\lambda,\mu\in\complex$ (with
  at least one of them being non-zero), there exists a Poisson algebra
  automorphism $\phi:\mathcal{A}_1\to\mathcal{A}$ of the form
  \begin{align*}
    \phi(x)=ax+by\qquad\phi(y)=cx+dy
  \end{align*}
  with $a,b,c,d\in\complex$ and $ad-bc\neq 0$. Thus, one needs to
  prove that
  \begin{align}
    \{\phi(x),\phi(y)\}=\phi(\{x,y\}_1)\label{eq:A.A1.auto}
  \end{align}
  for every allowed choice of $\lambda,\mu\in \mathbb{C}$. Starting from the
  left hand side we get
  \begin{align*}
    \{\phi(x),\phi(y)\}&=\{ax+by,cx+dy\}=ad\{x,y\}+bc\{y,x\}\\
                       &=ad\{x,y\}-bc\{x,y\}=(ad-bc)x.
  \end{align*}
  From the right hand side, we get 
  \begin{align*}
    \phi(\{x,y\}_1)&=\phi(\lambda x+\mu y)=\lambda\phi(x)+\mu\phi(y)=\lambda(ax+by)+\mu(cx+dy)\\
                   &=\lambda ax+\lambda by+\mu cx+\mu dy
  \end{align*}
  That is, \eqref{eq:A.A1.auto} is equivalent to
  \begin{align}
    &\lambda a+\mu c=ad-bc\label{eq3.1}\\
    &\lambda b+\mu d=0\label{eq3.2}
  \end{align}
  If $\lambda=0$ then $\mu\neq0$, since $\{x,y\}_1\neq0$. From
  \eqref{eq3.2} one gets that $d=0$ and inserting this into \eqref{eq3.1} one
  obtains $\mu c=bc$ implying that $b=-\mu$ (note that $c\neq0$, since
  $ad-bc\neq0$). Hence, choosing e.g. $c=1$ and $a=0$, gives
  $\phi(x)=-\mu y$ and $\phi(y)=x$ defining a Poisson algebra
  isomorphism.

  If $\lambda\neq0$ then from \eqref{eq3.2} one gets
  $b=\frac{-\mu d}{\lambda}$ and inserting this into \eqref{eq3.1} one
  obtains
  \begin{align*}
    \lambda a+\mu c &= ad+\frac{\mu cd}{\lambda}\equivalent
                      a(\lambda-d) = \frac{\mu c}{\lambda}(d-\lambda),
  \end{align*}
  and choosing $d=\lambda$ we get $b=-\mu$ (note that $a\neq0$ since
  $ad-bc\neq0$). Hence, choosing e.g. $a=1$ and $c=0$ gives
  $\phi(x)=x-\mu y$ and $\phi(y)=\lambda y$ defining a Poisson algebra
  isomorphism. Thus, we have shown that for every choice of
  $\lambda,\mu\in\complex$ such that $\lambda x+\mu y\neq 0$, one
  can construct a Poisson algebra isomorphism
  $\phi:\mathcal{A}_1\to\mathcal{A}$.
\end{proof}

\noindent
Now, let $\mathbb{C}(x,y)$ denote the rational functions in $x,y$ and let
$\mathbb{C}(x)$ denote the rational functions in $x$. Any
Poisson structure on $\complex[x,y]$ extends to a Poisson structure on
$\complex(x,y)$ via Leibniz rule
\begin{align*}
  \{p,q^{-1}\} = -\{p,q\}q^{-2}
\end{align*}
for $p,q\in\complex[x,y]$. Thus, in the following, we let
$\mathcal{A}(x,y)$ denote the Poisson algebra
$(\complex(x,y),\{\cdot,\cdot\})$ with $\{x,y\}=x$. Given the Poisson
algebra $\mathcal{A}(x,y)$ we set out to study the possible K\"ahler-Poisson
algebra structures arising from $\mathcal{A}(x,y)$; that is, finding $g_{ij}$
such that $(\mathcal{A}(x,y),g,\{x,y\})$ is a K\"ahler-Poisson algebra.

It is easy to check that for an arbitrary symmetric matrix $g$ one
obtains
\begin{align*}
  \mathcal{P}g\mathcal{P}g\mathcal{P}=-\{x,y\}^2\det(g)\mathcal{P}=-x^2\det(g)\mathcal{P}
\end{align*}
giving $\eta=(x^2\det(g))^{-1}$, implying that
$(\mathcal{A}(x,y),g,\{x,y\})$ is a K\"ahler-Poisson algebra as long
as $\det(g)\neq 0$. Hence, any non-degenerate $(2\times 2)$-matrix
$g$, with entries in $\mathcal{A}(x,y)$, gives rise to a
K\"ahler-Poisson algebra over $\mathcal{A}(x,y)$.

Next, let us recall that all automorphisms of $\mathbb{C}[x,y]$ (see
\cite{uber ganze,automorphism}) are given by compositions of
\begin{center}
  $ \phi(x)=\alpha_1x+\beta_1y+\gamma_1$ and $\phi(y)=\alpha_2x+\beta_2y+\gamma_2$ 
\end{center}
for
$\alpha_1,\beta_1,\gamma_1,\alpha_2,\beta_2,\gamma_2 \in \mathbb{C}$,
with $\alpha_1\beta_2\neq\alpha_2\beta_1$ and
\begin{center}
  $\phi(x)=x$ and $\phi(y)=y+p(x)$ 
\end{center}
for all $p(x)\in \mathbb{C}[x]$. In order to use these to construct
K\"ahler-Poisson algebra morphisms, we need to check which ones that
are Poisson algebra morphisms.

\begin{lemma}\label{lemma1}
  Let $\mathcal{A}(x,y)=\mathbb{C}(x,y)$ be the rational functions in $x,y$ with a Poisson structure given by $\{x,y\}=x$. Then:
  \begin{enumerate}[(A)]
  \item $ \phi(x)=\alpha_1x+\beta_1y+\gamma_1$ and
    $\phi(y)=\alpha_2x+\beta_2y+\gamma_2$, for
    $\alpha_1,\beta_1,\gamma_1,\alpha_2,\beta_2,\gamma_2 \in \mathbb{C}$
    with $\alpha_1\beta_2\neq\alpha_2\beta_1$, is a Poisson algebra
    automorphism of $\mathcal{A}(x,y)$ if $\beta_1=\gamma_1=0$ and
    $\beta_2=1$, giving $\phi(x)=\alpha_1x$ and
    $\phi(y)=\alpha_2x+y+\gamma_2$.
  \item $\phi(x)=\alpha x$ and $\phi(y)=y+p(x)$ is a Poisson algebra
    automorphism for all $p(x)\in \mathbb{C}[x]$ and
    $\alpha\in \mathbb{C}\backslash\{0\}$.
  \end{enumerate}
\end{lemma}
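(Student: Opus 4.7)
The plan is to exploit the observation that a $\complex$-algebra automorphism $\phi$ of $\mathcal{A}(x,y)$ is a Poisson algebra automorphism if and only if the single identity
\[
\{\phi(x),\phi(y)\} = \phi(\{x,y\}) = \phi(x)
\]
holds, since the Poisson bracket on $\complex[x,y]$ is determined from $\{x,y\}=x$ by bilinearity and the Leibniz rule, and then extends uniquely to $\complex(x,y)$ via the recalled formula $\{p,q^{-1}\}=-\{p,q\}q^{-2}$. Both families in the lemma consist of $\complex$-algebra automorphisms by the Jung--van der Kulk theorem (family (B) is a composition of a diagonal type-(2) automorphism with a type-(1) triangular shift), so the task reduces to a single compatibility check per case.

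For part (A), I expand using bilinearity of the bracket and $\{x,y\}=x$:
\[
\{\phi(x),\phi(y)\} = (\alpha_1\beta_2-\alpha_2\beta_1)\,x,
\]
and set this equal to $\phi(x)=\alpha_1 x+\beta_1 y+\gamma_1$. Matching coefficients of the $\complex$-linearly independent elements $1,x,y$ in $\mathcal{A}(x,y)$ then forces $\gamma_1=0$, $\beta_1=0$, and $\alpha_1\beta_2-\alpha_2\beta_1=\alpha_1$. Combined with $\beta_1=0$, the last equation reads $\alpha_1\beta_2=\alpha_1$; since $\alpha_1\neq 0$ (otherwise $\phi(x)=0$, contradicting invertibility once $\beta_1=\gamma_1=0$), this forces $\beta_2=1$, giving exactly the form stated in (A). Conversely, plugging these values back in verifies the compatibility identity, establishing sufficiency.

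For part (B), the computation is immediate: because $p(x)$ depends only on $x$,
\[
\{\phi(x),\phi(y)\} = \{\alpha x,\,y+p(x)\} = \alpha\{x,y\}+\alpha\{x,p(x)\} = \alpha x = \phi(x),
\]
so every choice of $\alpha\in\complex\setminus\{0\}$ and $p(x)\in\complex[x]$ works, with $\alpha\neq 0$ needed only for invertibility.

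There is essentially no deep obstacle here; the argument is a direct verification. The only points that require a moment of care are the reduction of Poisson compatibility to a condition on the generators $x,y$, and the observation that in part (A) one must rule out $\alpha_1=0$ separately in order to conclude $\beta_2=1$ from $\alpha_1\beta_2=\alpha_1$.
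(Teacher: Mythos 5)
Your proof is correct and follows essentially the same route as the paper: reduce Poisson compatibility to the single identity $\{\phi(x),\phi(y)\}=\phi(x)$ on generators, expand by bilinearity to get $(\alpha_1\beta_2-\alpha_2\beta_1)x$, compare coefficients of $1,x,y$ to force $\beta_1=\gamma_1=0$ and $\beta_2=1$, and verify (B) directly using $\{x,p(x)\}=0$. The only cosmetic difference is how $\alpha_1\neq 0$ is justified (you use invertibility of $\phi$, the paper uses $\alpha_1\beta_2-\alpha_2\beta_1\neq 0$ together with $\beta_1=0$); both are valid.
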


\begin{proof}
(A) For $\phi$ to be a Poisson algebra automorphism we need to check that $\phi(\{x,y\})=\{\phi(x),\phi(y)\}$ and since $\{x,y\}=x$ this is equivalent to  $\phi(x)=\{\phi(x),\phi(y)\}$. We start from 
\begin{align*}
\{\phi(x),\phi(y)\}&=\{\alpha_1x+\beta_1y+\gamma_1,\alpha_2x+\beta_2y+\gamma_2\}\\&=\alpha_1\alpha_2\{x,x\}+\alpha_1\beta_2\{x,y\}+\beta_1\beta_2\{y,y\}+\beta_1\alpha_2\{y,x\}\\&=\alpha_1\beta_2\{x,y\}+\beta_1\alpha_2\{y,x\}=(\alpha_1\beta_2-\beta_1\alpha_2)x.
\end{align*}
Now, $\phi(x)=\{\phi(x),\phi(y)\}$ gives
$\alpha_1x+\beta_1y+\gamma_1=(\alpha_1\beta_2-\beta_1\alpha_2)x$.  and
one obtains
\begin{align*}
 \alpha_1=\alpha_1\beta_2-\beta_1\alpha_2,\quad \beta_1=0,\quad\gamma_1=0,  
\end{align*}
implying that $\beta_2=1$ since $\alpha_1\neq0$ (by the assumption
$\alpha_1\beta_2-\alpha_2\beta_1\neq0$). Hence, we get
$ \phi(x)=\alpha_1x$ and $\phi(y)=\alpha_2x+y+\gamma_2$.

(B) For $\phi$ to be a Poisson algebra automorphism we need to check
that $\phi(\{x,y\})=\{\phi(x),\phi(y)\}$ and since $\{x,y\}=x$ we show
that $\phi(x)=\{\phi(x),\phi(y)\}$. We start from the right side
\begin{align*}
  \{\phi(x),\phi(y)\}=\{\alpha x,y+p(x)\}=\alpha\{x,y\}+\alpha\{x,p(x)\}=\alpha x=\phi(x)
\end{align*}
since $\{x,p(x)\}=0$ for arbitrary $p(x)$.  Hence, $\phi$ is a
Poisson algebra automorphism for an arbitrary $p(x)\in\mathbb{C}[x]$
and $\alpha\in \mathbb{C}\backslash\{0\}$.
\end{proof}

\noindent
Next, let us show that compositions of Poisson algebra
automorphisms in Lemma~\ref{lemma1} may be written in a simple form.

\begin{proposition}
  Let $\phi=\phi_1\circ\phi_2\circ...\circ\phi_n$ be an arbitrary
  composition of automorphisms, where each $\phi_k$ can be written as:
  either $\phi(x)=\alpha_1 x$, $\phi(y)=\alpha_2 x+y+\gamma_2$ or
  $\phi(x)=\alpha x$, $\phi(y)=y+p(x)$. Then there exists
  $\alpha\in \mathbb{C}$ and $p(x)\in\mathbb{C}[x]$ such that
  $\phi(x)=\alpha x$ and $\phi(y)=y+p(x)$.
\end{proposition}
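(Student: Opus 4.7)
The plan is to reduce both types of generators to a single uniform template and then verify closure under composition. First I would observe that every automorphism of type (A) from Lemma~\ref{lemma1} is already of the form asserted in the conclusion: setting $\alpha = \alpha_1$ and $p(x) = \alpha_2 x + \gamma_2 \in \mathbb{C}[x]$ rewrites $\phi(x) = \alpha_1 x$, $\phi(y) = \alpha_2 x + y + \gamma_2$ as $\phi(x) = \alpha x$, $\phi(y) = y + p(x)$. Hence each factor $\phi_k$ already has the form $\phi_k(x) = \alpha_k x$ and $\phi_k(y) = y + p_k(x)$ for some $\alpha_k \in \mathbb{C}\setminus\{0\}$ and $p_k(x) \in \mathbb{C}[x]$.

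Given this, I would proceed by induction on $n$, with the base case $n=1$ handled by the observation above. For the inductive step it suffices to show that the set $\mathcal{G}$ of automorphisms of the form $\phi(x) = \alpha x$, $\phi(y) = y + p(x)$ is closed under composition. The key computation is the following: if $\phi, \psi \in \mathcal{G}$ with $\phi(x) = \alpha x$, $\phi(y) = y + p(x)$ and $\psi(x) = \beta x$, $\psi(y) = y + q(x)$, then
\begin{align*}
(\phi\circ\psi)(x) &= \phi(\beta x) = \alpha\beta x, \\
(\phi\circ\psi)(y) &= \phi\bigl(y + q(x)\bigr) = y + p(x) + q(\alpha x),
\end{align*}
so that $\phi \circ \psi \in \mathcal{G}$, with scalar $\alpha\beta \in \mathbb{C}\setminus\{0\}$ and polynomial $p(x) + q(\alpha x) \in \mathbb{C}[x]$. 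Applying this inductively to $\phi_1 \circ (\phi_2 \circ \cdots \circ \phi_n)$ yields the claim.

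I do not expect any serious obstacle: the proof is essentially a bookkeeping exercise, and the only conceptual point is recognizing that type (A) is a special case of type (B), after which the set $\mathcal{G}$ manifestly forms a group under composition (isomorphic to a semidirect product of $\mathbb{C}\setminus\{0\}$ acting on the additive group $\mathbb{C}[x]$). If desired, an explicit closed form for $\phi = \phi_1 \circ \cdots \circ \phi_n$ may be written as $\phi(x) = \bigl(\prod_{k=1}^{n}\alpha_k\bigr) x$ and $\phi(y) = y + \sum_{k=1}^{n} p_k\bigl((\alpha_1\cdots\alpha_{k-1}) x\bigr)$, but this is not needed for the statement.
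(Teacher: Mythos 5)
Your proof is correct and follows essentially the same route as the paper: rewrite the type (A) generators in the uniform form $\phi(x)=\alpha x$, $\phi(y)=y+p(x)$, then verify closure under composition via the computation $(\phi\circ\psi)(x)=\alpha\beta x$ and $(\phi\circ\psi)(y)=y+p(x)+q(\alpha x)$, and induct. The explicit closed form and the group-theoretic remark are pleasant extras but not needed.
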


\begin{proof}
  We show that every composition of all Poisson algebra automorphisms
  in Lemma \ref{lemma1} can be written with the form
  $\phi(x)=\alpha x$ and $\phi(y)=y+p(x)$. Clearly, both type (A) and
  (B) Poisson algebra automorphisms in Lemma~\ref{lemma1} can be
  written in this form. Thus, let $\phi_1(x)=\alpha_1 x$,
  $\phi_1(y)=y+p_1(x)$, $\phi_2(x)=\alpha_2 x$ and
  $\phi_2(y)=y+p_2(x)$. Then
  \begin{enumerate}
  \item $\phi_1(\phi_2(x))=\phi_1(\alpha_2 x)=\alpha_2\phi_1(x)=\alpha_2\alpha_1 x=\alpha x$.
  \item $\phi_1(\phi_2(y))=\phi_1(y+p_2(x))=y+p_1(x)+p_2(\alpha_1x)=y+p(x)$. 
  \end{enumerate} 
  which are again of the form $\phi(x)=\alpha x$ and $\phi(y)=y+p(x)$,
  implying that an arbitrary composition of automorphisms is of the
  same form.
\end{proof}

In order to prove results related to arbitrary automorphisms of
$\mathcal{A}(x,y)$, we will need to consider the case when
$\phi(x)\in\complex(x)$. In this case, the possible types of
automorphisms can be explicitly described.

\begin{proposition}
  \label{proposition3.4}
  Let $\phi:\mathcal{A}(x,y)\rightarrow\mathcal{A}(x,y)$ be a Poisson
  algebra automorphism such that $\phi(x)\in\mathbb{C}(x)$. Then there
  exist $\alpha,\beta,\gamma,\delta\in \mathbb{C}$ and
  $r(x)\in \mathbb{C}(x)$ such that $\alpha\delta-\beta\gamma\neq0$ and
  \begin{align*}
    &\phi(x)=\frac{\alpha x+\beta}{\gamma x+\delta}\\
    &\phi(y)=\frac{(\alpha x+\beta)(\gamma x+\delta)y}{(\alpha
    \delta-\beta\gamma)x}+r(x).
  \end{align*}
\end{proposition}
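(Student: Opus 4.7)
The plan is to extract the form of $\phi$ from two ingredients: the Poisson bracket relation $\{\phi(x),\phi(y)\} = \phi(\{x,y\}) = \phi(x)$, and the fact that $\phi$ is a field automorphism of $\mathbb{C}(x,y)$.

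First, I would use the general identity $\{F,G\} = x(F_xG_y - F_yG_x)$ for $F,G \in \mathbb{C}(x,y)$, which follows from bilinearity, Leibniz and $\{x,y\} = x$. Writing $f(x) := \phi(x) \in \mathbb{C}(x)$, this gives $\{f(x),\phi(y)\} = x f'(x)\,\partial_y\phi(y)$, and equating with $\phi(x) = f(x)$ produces
\[
\partial_y\phi(y) = \frac{f(x)}{x f'(x)}.
\]
Since the right-hand side is independent of $y$, writing $\phi(y) = P/Q$ in lowest terms in $\mathbb{C}(x)[y]$ and observing that $Q \mid Q_y$ forces $Q \in \mathbb{C}(x)$, I would conclude that $\phi(y)$ is affine in $y$: there exists $r(x) \in \mathbb{C}(x)$ with
\[
\phi(y) = \frac{f(x)}{x f'(x)}\,y + r(x).
\]

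Second, I would use the surjectivity of $\phi$ to determine $f$. Since $\phi$ is an automorphism, $\mathbb{C}(\phi(x),\phi(y)) = \mathbb{C}(x,y)$. Solving the previous display for $y$ shows $y \in \mathbb{C}(x,\phi(y))$, hence $\mathbb{C}(x,\phi(y)) = \mathbb{C}(x,y)$. Because $\phi(y)$ is transcendental over $\mathbb{C}(x)$, a tower-degree computation yields
\[
[\mathbb{C}(x,y):\mathbb{C}(\phi(x),\phi(y))] = [\mathbb{C}(x):\mathbb{C}(f(x))].
\]
For this to be one, I need $\mathbb{C}(f(x)) = \mathbb{C}(x)$; by L\"uroth's theorem (or directly via the well-known degree formula $[\mathbb{C}(x):\mathbb{C}(f)] = \max(\deg p,\deg q)$ when $f = p/q$ is in lowest terms), this forces $f$ to be a M\"obius transformation $f(x) = (\alpha x + \beta)/(\gamma x + \delta)$ with $\alpha\delta - \beta\gamma \neq 0$.

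Finally, a short direct calculation substitutes this $f$ back into the expression for $\phi(y)$: using $f'(x) = (\alpha\delta - \beta\gamma)/(\gamma x + \delta)^2$, one obtains
\[
\frac{f(x)}{x f'(x)} = \frac{(\alpha x + \beta)(\gamma x + \delta)}{(\alpha\delta - \beta\gamma)\,x},
\]
which matches the claimed form exactly. The main obstacle is the field-theoretic step pinning down $f$ as a M\"obius transformation; the rest is a straightforward derivation, but invoking L\"uroth (or the degree formula it implies) is what promotes a general rational $f$ to a linear fractional one. I would also take care that the ``integration'' producing the affine form of $\phi(y)$ is carried out rigorously in the rational-function setting, which is handled by the coprime numerator/denominator argument above.
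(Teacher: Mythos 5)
Your argument is correct, and it reaches the same two formulas by the same computational core (the identity $\{f(x),\phi(y)\}=xf'(x)\,\partial_y\phi(y)$ followed by ``integration'' in $y$), but it treats the two delicate points quite differently from the paper. The paper first asserts that, $\phi$ being invertible and $\phi(x)\in\mathbb{C}(x)$, the element $\phi(x)$ ``has to be an invertible rational function of $x$'' and hence a M\"obius transformation --- this is quoted as well known and not justified --- and only then solves for $\phi(y)$ and integrates. You instead first establish that $\phi(y)$ is affine in $y$ for a general nonconstant $f=\phi(x)$ (and your coprime numerator/denominator argument showing $Q\mid Q_y\Rightarrow Q\in\mathbb{C}(x)$ makes the integration step rigorous in $\mathbb{C}(x,y)$, something the paper glosses over), and only afterwards use surjectivity, the tower $\mathbb{C}(f,\phi(y))\subseteq\mathbb{C}(x,\phi(y))=\mathbb{C}(x,y)$ and the degree formula $[\mathbb{C}(x):\mathbb{C}(p/q)]=\max(\deg p,\deg q)$ to force $\mathbb{C}(f)=\mathbb{C}(x)$, i.e.\ $f$ M\"obius. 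So your route actually proves the fact the paper takes for granted; the price is the field-theoretic digression, the gain is a self-contained argument. Two small points you should make explicit: (i) $f$ is nonconstant (if $f'=0$ then $\{f,\phi(y)\}=0$ would force $\phi(x)=0$, contradicting injectivity), so dividing by $f'(x)$ is legitimate; and (ii) the coefficient $f(x)/(xf'(x))$ of $y$ is nonzero, which is what lets you solve for $y$ and conclude $\mathbb{C}(x,\phi(y))=\mathbb{C}(x,y)$ and that $\phi(y)$ is transcendental over $\mathbb{C}(x)$.
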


\begin{proof}
  If $\phi$ is invertible then, since $\phi(x)\in\mathbb{C}(x)$,
  $\phi(x)$ has to be an invertible rational function of $x$. It
  is well known that such a function is of the form
  $\phi(x)=\frac{\alpha x+\beta}{\gamma x+\delta}$ for some
  $\alpha,\beta,\gamma,\delta\in \mathbb{C}$ with
  $\alpha\delta-\beta\gamma\neq0$. Since $\phi$ is a Poisson algebra
  automorphism, one can determine the possible $\phi(y)$ via
  \begin{align*}
    \{\phi(x),\phi(y)\}&=\phi(\{x,y\}).
  \end{align*}
  We start from the left hand side
  \begin{align*}
    \{\phi(x),\phi(y)\}&=\Big\{\frac{\alpha x+\beta}{\gamma x+\delta},\phi(y)\Big\}=
                         \Big\{\frac{\alpha x+\beta}{\gamma x+\delta},y\Big\}\phi(y)^\prime_ y=
                         \Big(\frac{\alpha x+\beta}{\gamma x+\delta}\Big)^\prime_x\{x,y\}\phi(y)^\prime_y\\
                       &=\Big(\frac{\alpha\delta-\beta\gamma}{(\gamma x+\delta)^2}\Big)x\phi(y)^\prime_y,
  \end{align*}
  and from the right hand side we get 
  \begin{align*}
    \phi(\{x,y\})=\phi(x)=\frac{\alpha x+\beta}{\gamma x+\delta}.
  \end{align*}
  Therefore, we obtain
  \begin{align*}
    \frac{\alpha\delta-\beta\gamma}{\gamma x+\delta}x\phi(y)^\prime_y&=\alpha x+\beta\implies\\
    (\alpha\delta-\beta\gamma)x\phi(y)^\prime_y&=(\alpha x+\beta)(\gamma x+\delta)\implies\\
    \phi(y)&=\frac{(\alpha x+\beta)(\gamma x+\delta)y}{(\alpha \delta-\beta \gamma)x}+r(x).
  \end{align*}
  for some $r(x)\in\complex(x)$.
\end{proof}

Let us now start to investigate isomorphism classes of metrics for
K\"ahler-Poisson algebras over $\mathcal{A}(x,y)$. The simplest case
is when the metrics are constant; i.e 
\begin{align*}
  g=\begin{pmatrix}
    a & b  \\
    b & c \\
  \end{pmatrix}
  \qquad\text{and}\qquad \tilde{g}=
  \begin{pmatrix}
    \tilde{a} & \tilde{b}  \\
    \tilde{b} & \tilde{c} \\ 
  \end{pmatrix}
\end{align*}
for $a,b,c,\tilde{a},\tilde{b},\tilde{c}\in \mathbb{C}$.

\begin{proposition}\label{prop:constant.metric}
  Let $\mathcal{K}=(\mathcal{A}(x,y),g,\{x,y\})$ and
  $\tilde{\mathcal{K}}=(\mathcal{A}(x,y),\tilde{g},\{x,y\})$ be
  K\"ahler-Poisson algebras, with
  \begin{center}
    $g=\begin{pmatrix}
      a & b  \\
      b & c \\ 
    \end{pmatrix}$ and $\tilde{g}=\begin{pmatrix}
      \tilde{a} & \tilde{b}  \\
      \tilde{b} & \tilde{c} \\ 
    \end{pmatrix}$
  \end{center}
  where $a,b,c,\tilde{a},\tilde{b},\tilde{c}\in \mathbb{C}$ such that
  $\text{det}(g)\neq0$ and $\text{det}(\tilde{g})\neq0$. Then
  $\mathcal{K}\cong \tilde{\mathcal{K}}$ if and only if $c=\tilde{c}$.
\end{proposition}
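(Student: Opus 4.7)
The plan is to invoke Proposition~\ref{proposition2.2}: since $\mathcal{P}$ is invertible here (as noted in the remark following that proposition), $\mathcal{K}\cong\tilde{\mathcal{K}}$ iff there exists a Poisson algebra automorphism $\phi:\mathcal{A}(x,y)\to\mathcal{A}(x,y)$ with $\phi(\mathcal{A}_{\text{fin}})=\mathcal{A}_{\text{fin}}'$ and $\tilde{g}=A^{T}\phi(g)A$, where $A^{i}{}_{\alpha}=\partial\phi(x^{i})/\partial y^{\alpha}$.

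First I would observe that in the present setting $\mathcal{A}_{\text{fin}}=\mathcal{A}_{\text{fin}}'=\complex[x,y]$, so $\phi$ restricts to a Poisson algebra automorphism of $\complex[x,y]$. By Jung's theorem combined with Lemma~\ref{lemma1} and the proposition following it, every such $\phi$ has the form $\phi(x)=\alpha x$, $\phi(y)=y+p(x)$ for some $\alpha\in\complex\setminus\{0\}$ and $p(x)\in\complex[x]$. Since $g$ has constant entries we have $\phi(g)=g$, and a direct computation yields
\[
A=\begin{pmatrix}\alpha & 0\\ p'(x) & 1\end{pmatrix},\qquad
A^{T}g A=\begin{pmatrix}a\alpha^{2}+2\alpha b\,p'(x)+c\,p'(x)^{2} & \alpha b+c\,p'(x)\\ \alpha b+c\,p'(x) & c\end{pmatrix}.
\]

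Comparing this with $\tilde{g}$, the $(2,2)$-entry immediately forces $\tilde{c}=c$, proving necessity. For sufficiency, assuming $c=\tilde{c}$, the requirement that every entry of $A^{T}gA$ equal the corresponding constant entry of $\tilde{g}$, together with non-degeneracy of $g$, forces $p'(x)$ to be a constant $p_{1}$; the system to solve then reduces to
\[
\tilde{b}=\alpha b+c\,p_{1},\qquad \tilde{a}=a\alpha^{2}+2\alpha b\,p_{1}+c\,p_{1}^{2}.
\]
When $c\neq 0$, eliminating $p_{1}$ between the two equations reduces them to $\alpha^{2}=\det(\tilde{g})/\det(g)$ and $p_{1}=(\tilde{b}-\alpha b)/c$, both solvable over $\complex$ since $\det(g)\neq 0$. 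When $c=0$, non-degeneracy of $g$ and $\tilde{g}$ forces $b,\tilde{b}\neq 0$, and one takes $\alpha=\tilde{b}/b$ together with $p_{1}=(\tilde{a}-a\alpha^{2})/(2\alpha b)$.

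The main step is the preceding classification of Poisson automorphisms of $\mathcal{A}(x,y)$ preserving $\complex[x,y]$, which is already in hand from the results earlier in this section; the remaining bilinear-algebra computation is routine, with only a mild case split required to separate the singular case $c=0$ from the generic $c\neq 0$.
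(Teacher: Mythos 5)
Your sufficiency direction is essentially the paper's proof: the same ansatz $\phi(x)=\alpha x$, $\phi(y)=y+p(x)$, the same two scalar equations extracted from $\tilde g=A^T\phi(g)A$, and the same case split $c\neq 0$ versus $c=0$, landing on the same formulas $\alpha^2=\det(\tilde g)/\det(g)$ and $p_1=(\tilde b-\alpha b)/c$, respectively $\alpha=\tilde b/b$. The necessity direction is where you genuinely diverge from the paper, and it is also where your argument has a gap. The paper does not go through $\phi(\mathcal{A}_{\text{fin}})=\mathcal{A}'_{\text{fin}}$ and Jung's theorem at all: it uses Proposition~\ref{proposition2.5} to obtain $\phi(\eta)=\tilde\eta$, hence $\phi(x)^2\det(g)=x^2\det(\tilde g)$, which forces $\phi(x)=\alpha x$ directly; Proposition~\ref{proposition3.4} then gives $\phi(y)=y+r(x)$ with $r(x)\in\complex(x)$ only rational, and the $(2,2)$ entry of $A^T\phi(g)A$ yields $\tilde c=c$ exactly as in your computation.

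The gap is the step ``by Jung's theorem combined with Lemma~\ref{lemma1} and the proposition following it, every such $\phi$ has the form $\phi(x)=\alpha x$, $\phi(y)=y+p(x)$.'' Jung's theorem decomposes an arbitrary automorphism of $\complex[x,y]$ into elementary factors, but nothing guarantees that the individual factors of a \emph{Poisson} automorphism are themselves Poisson automorphisms; Lemma~\ref{lemma1} and the composition proposition only handle compositions of elementary maps that are already Poisson maps, so they do not classify all Poisson automorphisms of $\complex[x,y]$. The claim you need is nevertheless true and can be repaired without Jung's theorem: for any automorphism of $\complex[x,y]$ the Jacobian determinant $J=\partial_x\phi(x)\,\partial_y\phi(y)-\partial_y\phi(x)\,\partial_x\phi(y)$ is a nonzero constant, and since $\{f,h\}=x(\partial_xf\,\partial_yh-\partial_yf\,\partial_xh)$ one has $\{\phi(x),\phi(y)\}=xJ$; the Poisson condition $\{\phi(x),\phi(y)\}=\phi(x)$ then forces $\phi(x)=Jx$ and $\partial_y\phi(y)=1$, i.e.\ $\phi(y)=y+p(x)$. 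With that repair your route is complete, and it is arguably more self-contained than the paper's (it needs neither $\eta$ nor Proposition~\ref{proposition3.4}), at the cost of invoking the constancy of the Jacobian of a polynomial automorphism, whereas the paper's $\eta$-based argument stays entirely within the rational-function framework and does not need $\phi$ to preserve $\complex[x,y]$ at that stage.
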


\begin{proof}
  First, we assume that $c=\tilde{c}$ and show that
  $\mathcal{K}\cong \tilde{\mathcal{K}}$ by using an automorphism of
  the form $\phi(x)=\alpha x$ and $\phi(y)=y+p(x)$
  (cf. Lemma~\ref{lemma1}). We will do this by applying
  Proposition~\ref{proposition2.2} to show that there exists
  $\alpha\in\complex$ and $p(x)\in\complex[x]$ such that
  \begin{align}\label{eq2}
    \tilde{g}=A^T\phi(g)A.
  \end{align}
  First, note that $\phi(g)=g$, since $\phi$ is unital. From
  $A^i_{\alpha}=\frac{\partial\phi(x^i)}{\partial y^{\alpha}}$ one
  computes
  \begin{center}
    $A=\begin{pmatrix}
      \frac{\partial\phi(x)}{\partial x} & \frac{\partial\phi(x)}{\partial y}  \\
      \frac{\partial\phi(y)}{\partial x} & \frac{\partial\phi(y)}{\partial y} \\ 
    \end{pmatrix}=\begin{pmatrix}
      \alpha & 0  \\
      p^{\prime}(x) & 1 \\ 
    \end{pmatrix}$,
  \end{center}
  giving \eqref{eq2} as
  \begin{align*}
    \begin{pmatrix}
      \tilde{a} & \tilde{b}  \\
      \tilde{b} & c \\ 
    \end{pmatrix}&= \begin{pmatrix}
      \alpha & p^{\prime}(x)  \\
      0 & 1 \\ 
    \end{pmatrix}\begin{pmatrix}
      a & b  \\
          b & c \\ 
        \end{pmatrix}\begin{pmatrix}
          \alpha & 0  \\
          p^{\prime}(x) & 1 \\ 
        \end{pmatrix}= \begin{pmatrix}
          \alpha a+p^{\prime}(x) b & \alpha b+p^{\prime}(x) c  \\
          b & c \\ 
        \end{pmatrix}\begin{pmatrix}
          \alpha & 0  \\
          p^{\prime}(x) & 1 \\ 
        \end{pmatrix}\\&=\begin{pmatrix}
          \alpha(\alpha a+p^{\prime}(x) b)+p^{\prime}(x)(\alpha b+p^{\prime}(x) c ) & \alpha b+p^{\prime}(x) c  \\
          \alpha b+\mathcal{P}^{\prime}(x) c & c \\ 
        \end{pmatrix},
  \end{align*} 
  showing that \eqref{eq2} is equivalent to
  \begin{align}\label{eq3}
    \tilde{a}=\alpha^2a+2\alpha p^{\prime}(x) b+(p^{\prime}(x))^2c
  \end{align} 
  \begin{align}\label{eq4}
    \tilde{b}=\alpha b+p^{\prime}(x) c
  \end{align}  
  First, assume that $c\neq0$. From \eqref{eq4} we get
  $p^{\prime}(x)=\frac{\tilde{b}-\alpha b}{c}$ giving
  $p(x)=(\frac{\tilde{b}-\alpha b}{c})x$, and inserting
  $p^{\prime}(x)$ in \eqref{eq3} we obtain
  \begin{align*}
    \tilde{a}&=\alpha^2a+2\alpha\Big(\frac{\tilde{b}-\alpha b}{c}\Big) b+\Big(\frac{\tilde{b}-\alpha b}{c}\Big)^2c =\alpha^2a+\frac{2\alpha b \tilde{b}-2\alpha^2 b^2}{c} +\frac{\tilde{b}^2c-2\alpha b \tilde{b}c+\alpha^2 b^2 c}{c^2}
  \end{align*} 
  Multiplying both sides by $c^2$, we get
  \begin{align*}
    &\tilde{a}c^2=\alpha^2ac^2+2\alpha b \tilde{b}c-2\alpha^2 b^2c+\tilde{b}^2c-2\alpha b \tilde{b}c+\alpha^2 b^2 c\implies
    \alpha^2=\frac{\tilde{a}c-\tilde{b}^2}{ac-b^2},
  \end{align*}
  where $ac-b^2=\text{det}(g)\neq0$ by assumption. Hence, for
  $c=\tilde{c}\neq0$, we have constructed an isomorphism between
  $\mathcal{K}$ and $\tilde{\mathcal{K}}$.  If $c=\tilde{c}=0$ we get
  from \eqref{eq4} that $\alpha=\frac{\tilde{b}}{b}$, where $b\neq0$ since
  $\text{det}(g)\neq0$. Now, we find $p^{\prime}(x)$ from \eqref{eq3}
  by using $\alpha=\frac{\tilde{b}}{b}$
  \begin{align*}
    \tilde{a}&=a\Big(\frac{\tilde{b}}{b}\Big)^2 +2\Big(\frac{\tilde{b}}{b}\Big)p'(x)b+\big(p'(x)\big)^2 c\implies
               p'(x)=\frac{\tilde{a}b^2-a\tilde{b}^2}{2b^2\tilde{b}}            
  \end{align*}
  (note that $\tilde{b}\neq0$ since $\det(\tilde{g})\neq0$), giving
  $p(x)=(\frac{\tilde{a}b^2-a\tilde{b}^2}{2b^2\tilde{b}})x$. Hence,
  for $c=\tilde{c}=0$, this gives an isomorphism between $\mathcal{K}$
  and $\tilde{\mathcal{K}}$.  We conclude that
  $\mathcal{K}\cong \tilde{\mathcal{K}}$ if $c=\tilde{c}$.
  
  Vice versa, assume that $\mathcal{K}\cong \tilde{\mathcal{K}}$. We have
  \begin{center}
    $\eta=\{x,y\}^2\det(g)=x^2\det(g)$ and $\tilde{\eta}=\{x,y\}^2\det(\tilde{g})=x^2\det(\tilde{g})$
  \end{center}
  with $\det(g),\det(\tilde{g})\in\mathbb{C}$. By using Proposition
  \ref{proposition2.5}, stating that $\phi(\eta)=\tilde{\eta}$, one
  obtains
  \begin{align*}
    \phi(x^2)\det(g)&=x^2\det(\tilde{g})\implies
    \frac{\phi(x^2)}{x^2}=\frac{\det\tilde{g}}{\det(g)}\in\complex
  \end{align*}
  implying that $\phi(x)=\alpha x$ for some $\alpha\in
  \mathbb{C}$. Furthermore, using Proposition \ref{proposition3.4}
  with $\beta=0,\gamma=0,\delta=1$ one obtains
  $\phi(y)=y+r(x)$. Hence, any isomorphism have to be of the form
  $\phi(x)=\alpha x$ and $\phi(y)=y+r(x)$, for some
  $\alpha\in\complex$ and $r(x)\in \mathbb{C}(x)$.
     
  Using the above form of the isomorphism in Proposition
  \ref{proposition2.2} one gets
  \begin{align*} 
    \begin{pmatrix}
      \tilde{a} & \tilde{b}  \\
      \tilde{b} & \tilde{c} \\ 
    \end{pmatrix}&= \begin{pmatrix}
      \alpha & r^{\prime}(x)  \\
      0 & 1 \\ 
    \end{pmatrix}\begin{pmatrix}
      a & b  \\
      b & c \\ 
    \end{pmatrix}\begin{pmatrix}
      \alpha & 0  \\
      r^{\prime}(x) & 1 \\ 
    \end{pmatrix}\\&
                =\begin{pmatrix}
                  \alpha(\alpha a+r^{\prime}(x) b)+r^{\prime}(x)(\alpha b+r^{\prime}(x) c ) & \alpha b+r^{\prime}(x) c  \\
                  \alpha b+r^{\prime}(x) c & c \\ 
                \end{pmatrix},
  \end{align*} 
  giving $\tilde{c}=c$.                   
\end{proof}

\noindent
The above result shows that the isomorphism classes of
K\"ahler-Poisson algebras with constant metrics can be parametrized by
one (complex) parameter.  In the next result, we study the case when
the metric only depends on $x$, and start by giving sufficient
conditions for the K\"ahler-Poisson algebras to be isomorphic.

\begin{proposition}\label{proposition x}
  Let $\mathcal{K}=(\mathcal{A}(x,y),g,\{x,y\})$ and
  $\tilde{\mathcal{K}}=(\mathcal{A}(x,y),\tilde{g},\{x,y\})$ be
  K\"ahler-Poisson algebras, with
  \begin{center}
    $g=\begin{pmatrix}
      a(x) & b(x)  \\
      b(x) & c(x) \\ 
    \end{pmatrix}$ and $\tilde{g}=\begin{pmatrix}
      \tilde{a}(x) &\tilde{b}(x)  \\
      \tilde{b}(x) & \tilde{c}(x) \\ 
    \end{pmatrix}$
  \end{center}
  where
  $a(x),b(x),c(x),\tilde{a}(x),\tilde{b}(x),\tilde{c}(x)\in
  \mathbb{C}[x]$ such that $\text{det}(g)\neq0$ and
  $\text{det}(\tilde{g})\neq0$. If $c(x)\neq0$ and there
  exists $\alpha\in\mathbb{C}$ such that:
  \begin{enumerate}
  \item $\big(\tilde{a}(x)-\alpha^2a(\alpha x)\big)c(\alpha x)=\tilde{b}(x)^2-\alpha^2 b(\alpha x)^2$  
    
  \item $\frac{\tilde{b}(x)-\alpha b(\alpha x)}{c(\alpha x)}\in\mathbb{C}[x]$
  \item $\tilde{c}(x)=c(\alpha x)$
  \end{enumerate} 
  then $\mathcal{K}\cong \tilde{\mathcal{K}}$.
  If $\tilde{c}(x)=c(x)=0$ and there exists $\alpha\in\mathbb{C}$ such that: 
  \begin{enumerate}[(a)]
  \item $\tilde{b}(x)=\alpha b(\alpha x)$
  \item $\frac{\tilde{a}(x)-\alpha^2 a(\alpha x)}{2\alpha b(\alpha x)}\in\mathbb{C}[x]$
  \end{enumerate} 
  then $\mathcal{K}\cong \tilde{\mathcal{K}}$.
\end{proposition}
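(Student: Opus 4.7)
The plan is to mirror the strategy of Proposition~\ref{prop:constant.metric}: I will apply Proposition~\ref{proposition2.2} using the Poisson algebra automorphism $\phi(x)=\alpha x$, $\phi(y)=y+p(x)$ from Lemma~\ref{lemma1}(B), and translate $\tilde{g}=A^T\phi(g)A$ into conditions on $\alpha$ and $p(x)$. The computation of $A$ is identical to the constant-metric case, giving
\begin{align*}
A=\begin{pmatrix}\alpha & 0 \\ p^{\prime}(x) & 1\end{pmatrix},
\end{align*}
while $\phi$ applied to $g$ simply substitutes $\alpha x$ for $x$ in each entry. Writing out $A^T\phi(g)A$ and equating to $\tilde{g}$, the matrix equation reduces to the three scalar equations
\begin{align*}
\tilde{c}(x)&=c(\alpha x),\\
\tilde{b}(x)&=\alpha b(\alpha x)+p^{\prime}(x)c(\alpha x),\\
\tilde{a}(x)&=\alpha^2 a(\alpha x)+2\alpha p^{\prime}(x) b(\alpha x)+p^{\prime}(x)^2 c(\alpha x).
\end{align*}

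For the first case ($c(x)\neq 0$), condition (3) is precisely the first equation. I would then solve the second equation for $p^{\prime}(x)=\bigl(\tilde{b}(x)-\alpha b(\alpha x)\bigr)/c(\alpha x)$; condition (2) ensures $p^{\prime}(x)\in\mathbb{C}[x]$, and integrating (with zero constant of integration) yields a valid $p(x)\in\mathbb{C}[x]$. Substituting this expression for $p^{\prime}(x)$ into the third equation and multiplying through by $c(\alpha x)$, the mixed terms cancel and one is left with $\bigl(\tilde{a}(x)-\alpha^2 a(\alpha x)\bigr)c(\alpha x)=\tilde{b}(x)^2-\alpha^2 b(\alpha x)^2$, which is condition (1). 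Applying Proposition~\ref{proposition2.2} (and noting $\phi$ restricts to an automorphism of $\mathbb{C}[x,y]$, so $\phi(\mathcal{A}_\textnormal{fin})=\mathcal{A}_\textnormal{fin}$) then gives the desired isomorphism.

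For the second case ($c=\tilde{c}=0$), the first equation is automatic. Non-degeneracy of $g$ and $\tilde{g}$ forces $b(x)\neq 0$ and $\tilde{b}(x)\neq 0$, and $\alpha\neq 0$ by the definition of the automorphism. The second equation becomes $\tilde{b}(x)=\alpha b(\alpha x)$, which is condition (a). Solving the third for $p^{\prime}(x)$ yields $p^{\prime}(x)=\bigl(\tilde{a}(x)-\alpha^2 a(\alpha x)\bigr)/\bigl(2\alpha b(\alpha x)\bigr)$, and condition (b) guarantees this is a polynomial, so again integration produces $p(x)\in\mathbb{C}[x]$ and Proposition~\ref{proposition2.2} delivers the isomorphism.

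The argument is essentially routine bookkeeping built on the constant-metric proof; the only delicate point is ensuring that the rational expression solving for $p^{\prime}(x)$ actually lies in $\mathbb{C}[x]$, which is exactly why conditions (2) and (b) are imposed by hand in the statement. There is no genuine obstacle beyond this, since $\mathbb{C}[x]$ is closed under antidifferentiation.
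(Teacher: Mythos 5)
Your proposal is correct and follows essentially the same route as the paper's own proof: the same automorphism $\phi(x)=\alpha x$, $\phi(y)=y+p(x)$, the same matrix $A$, the same reduction of $\tilde{g}=A^T\phi(g)A$ to three scalar equations, and the same case split solving for $p'(x)$ and checking that conditions (1)--(3) resp.\ (a)--(b) are exactly what is needed. The extra remarks on $\phi(\mathcal{A}_\textnormal{fin})$ and on $b(x)\neq 0$ in the degenerate case are fine and only make the argument slightly more explicit than the paper's version.
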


\begin{proof}
  Let $\phi$ be an automorphism of $\phi(x)=\alpha x$ and
  $\phi(y)=y+p(x)$. We will show that one may find $\alpha\in\complex$
  and $p(x)\in\complex[x]$ such that
  \begin{align}\label{eq5.1}
    \tilde{g}=A^T\phi(g)A,
  \end{align}
  implying, via Proposition~\ref{proposition2.2}, that
  $\mathcal{K}\cong \tilde{\mathcal{K}}$.  From
  $A^i_{\alpha}=\frac{\partial\phi(x^i)}{\partial y^{\alpha}}$ one
  computes
\begin{center}
 $A=\begin{pmatrix}
     \frac{\partial\phi(x)}{\partial x} & \frac{\partial\phi(x)}{\partial y}  \\
     \frac{\partial\phi(y)}{\partial x} & \frac{\partial\phi(y)}{\partial y} \\ 
 \end{pmatrix}=\begin{pmatrix}
      \alpha & 0  \\
      p^{\prime}(x) & 1 \\ 
  \end{pmatrix}$,\end{center} giving \eqref{eq5.1} as
  \begin{align*}
    \begin{pmatrix}
      \tilde{a}(x) &\tilde{b}(x)  \\
      \tilde{b}(x) & \tilde{c}(x) \\ 
    \end{pmatrix}&= \begin{pmatrix}
      \alpha & p^{\prime}(x)  \\
      0 & 1 \\ 
    \end{pmatrix}\begin{pmatrix}
      a(\alpha x) & b(\alpha x)  \\
      b(\alpha x) & c(\alpha x) \\ 
    \end{pmatrix}\begin{pmatrix}
      \alpha & 0  \\
      p^{\prime}(x) & 1 \\ 
    \end{pmatrix}\\&
                    =\begin{pmatrix}
                       \alpha^2 a(\alpha x)+2\alpha p^{\prime}(x) b(\alpha x)+p'(x)^2c(\alpha x) & \alpha b(\alpha x)+p^{\prime}(x) c(\alpha x)  \\
                       \alpha b(\alpha x)+p^{\prime}(x) c(\alpha x) & c(\alpha x) \\ 
                     \end{pmatrix}.
  \end{align*} 
  Hence, \eqref{eq5.1} is equivalent to
  \begin{align}
    &\tilde{a}(x)=\alpha^2a(\alpha x)+2\alpha p^{\prime}(x) b(\alpha x)+p^{\prime}(x)^2c(\alpha x)\label{eq6}\\
    &\tilde{b}(x)=\alpha b(\alpha x)+p^{\prime}(x) c(\alpha x)\label{eq7}\\
    &\tilde{c}(x) = c(\alpha x)\label{eq:tildec}
  \end{align}  
  First, assume that $c(x)\neq0$ together with the assumptions (1)--(3).
  From \eqref{eq7} we get
  \begin{align*}
    p'(x)=\frac{\tilde{b}(x)-\alpha b(\alpha x)}{c(\alpha x)}.
  \end{align*}
  and, by assumption, this is in $\mathbb{C}[x]$ which implies that
  one may integrate to get $p(x)$. Inserting $p'(x)$ in
  \eqref{eq6} we obtain
  \begin{align*}
    \tilde{a}(x)c(\alpha x)
    &=\alpha^2a(\alpha x)c(\alpha x)+2\alpha b(\alpha x)p^\prime(x)c(\alpha x)+p^\prime(x)^2c(\alpha x)^2\\
    &=\alpha^2a(\alpha x)c(\alpha x)+2\alpha b(\alpha x)\big(\tilde{b}(x)-\alpha b(\alpha x)\big)+\big(\tilde{b}(x)-\alpha b(\alpha x)\big)^2\\
    &=\alpha^2a(\alpha x)c(\alpha x)-\alpha^2 b(\alpha x)^2+\tilde{b}(x)^2
  \end{align*}
  that is,
  $\big(\tilde{a}(x)-\alpha^2a(\alpha x)\big)c(\alpha
  x)=\tilde{b}(x)^2-\alpha^2 b(\alpha x)^2$ which is true by
  assumption.

  If $\tilde{c}(x)=c(x)=0$, we assume that
  $\frac{\tilde{a}(x)-\alpha^2 a(\alpha x)}{2\alpha b(\alpha
    x)}\in\mathbb{C}[x]$ and $\tilde{b}(x)=\alpha b(\alpha x)$. Then
  \eqref{eq7} is immediately satisfied and from \eqref{eq6} we get
  $p^{\prime}(x)=\frac{\tilde{a}(x)-\alpha^2 a(\alpha x)}{2\alpha
    b(\alpha x)}$. By assumption this is in $ \mathbb{C}[x]$ which
  implies that one may integrate to get $p(x)$.  This shows that one
  may explicitly construct an isomorphism between $\mathcal{K}$ and
  $\tilde{\mathcal{K}}$, given the assumptions in the statement.
\end{proof}

\noindent
For the sake of illustration, let us use the above result to give a
simple example, and construct two seemingly different metrics that
give rise to isomorphic K\"ahler-Poisson algebras.

\begin{example}
  Let $\mathcal{K}=(\mathcal{A}(x,y),g,\{x,y\})$ and
  $\tilde{\mathcal{K}}=(\mathcal{A}(x,y),\tilde{g},\{x,y\})$ be
  K\"ahler-Poisson algebras, with
  \begin{center} $g=\begin{pmatrix}
      a(x) & 0 \\
      0 & c(x) \\
    \end{pmatrix}$ and $\tilde{g}=\begin{pmatrix}
      a(x)+q(x)^2c(x) &q(x)c(x) \\
      q(x)c(x) & c(x)\\ 
    \end{pmatrix}$
  \end{center}
  for $a(x),c(x),q(x)\in\mathbb{C}[x]$. We will use Proposition
  \ref{proposition x} to show that
  $\mathcal{K}\cong \tilde{\mathcal{K}}$. Let us check conditions
  (1)--(3) with $\alpha=1$, $b(x)=0$, $\tilde{c}(x)=c(x)$, $\tilde{b}(x)=q(x)c(x)$ and
  \begin{align*}
    \tilde{a}(x) = a(x)+q(x)^2c(x).
  \end{align*}
  \begin{enumerate}
  \item $\big(\tilde{a}(x)-\alpha^2a(\alpha x)\big)c(\alpha x)=\tilde{b}(x)^2-\alpha^2 b(\alpha x)^2$:
    \begin{align*}
      &\big(\tilde{a}(x)-\alpha^2a(\alpha x)\big)c(\alpha x) = q(x)^2c(x)^2\\
      &\tilde{b}(x)^2-\alpha^2 b(\alpha x)^2 = q(x)^2c(x)^2,
    \end{align*}
    
  \item $\displaystyle\frac{\tilde{b}(x)-\alpha b(\alpha x)}{c(\alpha x)}=q(x)\in\mathbb{C}[x]$,
    
  \item $\tilde{c}(x)=c(\alpha x)$ since $\alpha=1$ and $\tilde{c}(x)=c(x)$.
  \end{enumerate}
  For instance, with $a(x)=x$, $c(x)=x^2$ and $q(x)=x^3$ one concludes that
  \begin{align*}
    g =
    \begin{pmatrix}
      x & 0 \\
      0 & x^2
    \end{pmatrix}\text{ and }
          \begin{pmatrix}
            x+x^8 & x^5\\
            x^5 & x^2
          \end{pmatrix}
  \end{align*}
  define isomorphic K\"ahler-Poisson algebras.
\end{example}

\noindent
The following result shows that, in the more restricted situation
where the metrics are assumed to be diagonal, one describe all
isomorphism classes.

\begin{proposition}\label{prop:diagonal.g}
  Let $\mathcal{K}=(\mathcal{A}(x,y),g,\{x,y\})$ and
  $\tilde{\mathcal{K}}=(\mathcal{A}(x,y),\tilde{g},\{x,y\})$ be
  K\"ahler-Poisson algebras, with
  \begin{center}
    $g=\begin{pmatrix}
      a(x) & 0  \\
      0 & c(x) \\ 
    \end{pmatrix}$ and $\tilde{g}=\begin{pmatrix}
      \tilde{a}(x) &0  \\
      0 & \tilde{c}(x) \\ 
    \end{pmatrix}$
  \end{center}
  where $a(x),c(x),\tilde{a}(x),\tilde{c}(x)\in \mathbb{C}[x]$ such
  that $\text{det}(g)\neq0$ and $\text{det}(\tilde{g})\neq0$. Then
  $\mathcal{K}\cong \tilde{\mathcal{K}}$ if and only if there exists
  $\alpha\in\mathbb{C}$ such that:
  \begin{enumerate}
  \item $\tilde{c}(x)=c(\alpha x)$
  \item $\tilde{a}(x)=\alpha^2a(\alpha x)$ 
  \end{enumerate} 
\end{proposition}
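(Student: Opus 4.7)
The plan has two directions. For sufficiency, I will specialize Proposition \ref{proposition x} to the diagonal case. For necessity, I will use Proposition \ref{proposition2.5} to constrain the form of any isomorphism $\phi$, combine this with Proposition \ref{proposition3.4} and the constraint $\phi(\mathcal A_{\text{fin}})=\mathcal A_{\text{fin}}$ to force $\phi(x)=\alpha x$ and $\phi(y)=y+r(x)$, and then read off the conditions from the matrix equation of Proposition \ref{proposition2.2}.

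Sufficiency is a direct application: with $b(x)=\tilde b(x)=0$, condition (3) of Proposition \ref{proposition x} is $\tilde c(x)=c(\alpha x)$, condition (2) reduces to $0\in\complex[x]$, and condition (1) becomes $(\tilde a(x)-\alpha^2 a(\alpha x))\,c(\alpha x)=0$; all three hold by hypothesis (using $c(\alpha x)\neq 0$, which follows from $\det g\neq 0$).

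For necessity, suppose $\phi$ is an isomorphism. The key step, which I expect to be the main obstacle, is showing that $\phi(x)\in\complex[x]$. Proposition \ref{proposition2.5}, together with invertibility of $\mathcal P'$, gives $\phi(\eta)=\tilde\eta$, which unfolds to
\begin{align*}
\phi(x)^2\,a(\phi(x))\,c(\phi(x)) = x^2\,\tilde a(x)\,\tilde c(x).
\end{align*}
Since $\phi(\mathcal A_{\text{fin}})=\mathcal A_{\text{fin}}$ forces $\phi(x)\in\complex[x,y]$, writing $\phi(x)=\sum_{i=0}^N f_i(x)\,y^i$ with $f_N\neq 0$ and setting $F(t):=t^2 a(t)\,c(t)\in\complex[t]$, the left-hand side has $y$-degree $N\cdot\deg F$ with nonzero leading coefficient in $\complex[x]$, whereas the right-hand side is $y$-free; since $\deg F\geq 2$, this forces $N=0$. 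Proposition \ref{proposition3.4} then writes $\phi(x)=(\alpha x+\beta)/(\gamma x+\delta)$; being a polynomial forces $\gamma=0$, while the explicit formula there for $\phi(y)$ contains a term $(\beta/(\alpha x))\,y$ which, by $\phi(y)\in\complex[x,y]$, forces $\beta=0$. Thus $\phi(x)=\alpha x$ and $\phi(y)=y+r(x)$ with $r(x)\in\complex[x]$. Finally, applying Proposition \ref{proposition2.2}, the matrix $\tilde g=A^T\phi(g)A$ with $A=\left(\begin{smallmatrix}\alpha & 0\\ r'(x) & 1\end{smallmatrix}\right)$ and $\phi(g)=\mathrm{diag}(a(\alpha x),c(\alpha x))$ has off-diagonal entry $r'(x)\,c(\alpha x)$; together with $c(\alpha x)\neq 0$, this forces $r'(x)=0$, and the diagonal entries yield exactly conditions (1) and (2).
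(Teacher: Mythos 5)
Your proposal is correct, and it follows the same overall skeleton as the paper's proof: Proposition~\ref{proposition x} for sufficiency, and Propositions~\ref{proposition2.5}, \ref{proposition3.4} and \ref{proposition2.2} for necessity. The necessity direction is organized differently, however, and in a way that is arguably tighter. The paper passes from $\phi(\eta)=\tilde{\eta}$ directly to the assertion that $\phi(x)\in\complex(x)$ without further justification, keeps the full M\"obius form $\phi(x)=(\alpha x+\beta)/(\gamma x+\delta)$, uses the vanishing of the off-diagonal entry of $A^T\phi(g)A$ to force $\phi(y)^\prime_x=0$ (hence $\alpha\gamma=\beta\delta=0$), and only at the very end discards the inversion case $\phi(x)=\beta/(\gamma x)$ by observing that $\paraa{\beta/(\gamma x)}^2a\paraa{\beta/(\gamma x)}$ cannot be a polynomial. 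You instead extract more from $\phi(\eta)=\tilde{\eta}$ at the outset: since $\phi(\mathcal{A}_{\text{fin}})=\mathcal{A}_{\text{fin}}$ forces $\phi(x)\in\complex[x,y]$, your $y$-degree comparison in $\phi(x)^2a(\phi(x))c(\phi(x))=x^2\tilde{a}(x)\tilde{c}(x)$ shows $\phi(x)\in\complex[x]$ outright --- this also supplies the missing argument behind the paper's ``implying that $\phi(x)\in\complex(x)$'' --- after which polynomiality of $\phi(x)$ and of the coefficient of $y$ in $\phi(y)$ eliminates $\gamma$ and $\beta$ immediately, and the off-diagonal equation is needed only to conclude $r^\prime(x)=0$. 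The paper's route defers the use of polynomiality to the end and pays with a case analysis on $\alpha$; yours front-loads it and gets a shorter endgame. Both are valid, and the sufficiency direction is identical in the two arguments.
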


\begin{proof}
  To show that $\mathcal{K}\cong \tilde{\mathcal{K}}$ if (1) and (2)
  are satisfied, we use Proposition \ref{proposition x}. Namley, 
  \begin{align*}
    \big(\tilde{a}(x)-\alpha^2a(\alpha x)\big)c(\alpha x)=\tilde{b}(x)^2-\alpha^2 b(\alpha x)^2 
  \end{align*}
  for $b=0$ becomes
  \begin{align*}
    \big(\tilde{a}(x)-\alpha^2a(\alpha x)\big)c(\alpha x)=0
  \end{align*}
  which is satisfied since $\tilde{a}(x)=\alpha^2a(\alpha
  x)$. (Condition (2) in Proposition~\ref{proposition x} is trivially
  satisfied since $b=\tilde{b}=0$.)
	
  Vice versa, assume that $\mathcal{K}\cong \tilde{\mathcal{K}}$. We
  have
  \begin{center}
    $\eta=\{x,y\}^2\det(g)=x^2\det(g)$ and $\tilde{\eta}=\{x,y\}^2\det(\tilde{g})=x^2\det(\tilde{g})$
  \end{center}
  with $\det(g),\det(\tilde{g})\in\mathbb{C}[x]$. By using Proposition
  \ref{proposition2.5}, which gives that $\phi(\eta)=\tilde{\eta}$,
  it follows that
  \begin{align*}
    \phi(x^2)\det(g)&=x^2\det(\tilde{g})
  \end{align*}
  implying that $\phi(x)\in\mathbb{C}(x)$. By
  Proposition~\ref{proposition3.4}, if $\phi(x)\in\mathbb{C}(x)$, then
  the automorphism has to be of the form
  \begin{center}
    $\phi(x)=\frac{\alpha x+\beta}{\gamma x+\delta}$ and $\phi(y)=\frac{(\alpha x+\beta)(\gamma x+\delta)y}{(\alpha\delta-\beta\gamma)x}+r(x)$
  \end{center} 
  where, $\alpha,\beta,\gamma,\delta\in\mathbb{C}$,
  $r(x)\in\mathbb{C}(x)$ and
  $\alpha\delta-\beta\gamma\neq0$. Moreover, by Proposition
  \ref{proposition2.2}, one also has
  \begin{align}\label{eq5}
    \tilde{g}=A^T\phi(g)A.
  \end{align} 
  with
  \begin{align*}
    A=
    \begin{pmatrix}
      \phi(x)^{\prime}_x & \phi(x)^{\prime}_y  \\
      \phi(y)^{\prime}_x & \phi(y)^{\prime}_y \\ 
    \end{pmatrix}
    =
    \begin{pmatrix}
      \phi(x)^{\prime}_x & 0  \\
      \phi(y)^{\prime}_x & \phi(y)^{\prime}_y \\ 
    \end{pmatrix}
  \end{align*}
  since $\phi(x)'_y=0$, giving
  \begin{align*}
    \begin{pmatrix}
      \tilde{a}(x) &0  \\
      0 & \tilde{c}(x)  \\ 
    \end{pmatrix}&= \begin{pmatrix}
      \phi(x)^{\prime}_x & \phi(y)^{\prime}_x  \\
      0 & \phi(y)^{\prime}_y \\
    \end{pmatrix}\begin{pmatrix}
      \phi(a(x)) & 0  \\
      0 & \phi(c(x)) \\ 
    \end{pmatrix}\begin{pmatrix}
      \phi(x)^{\prime}_x & 0  \\
      \phi(y)^{\prime}_x & \phi(y)^{\prime}_y \\ 
    \end{pmatrix}\\
    &=
    \begin{pmatrix}
      \phi(a(x))\paraa{\phi(x)'_x}^2+\phi(c(x))\paraa{\phi(y)'_x}^2 &
      \phi(c(x))\phi(y)'_x\phi(y)'_y\\
      \phi(c(x))\phi(y)'_x\phi(y)'_y &
      \phi(c(x))\paraa{\phi(y)'_y}^2
    \end{pmatrix}.
  \end{align*} 
  Hence, \eqref{eq5} is equivalent to
  \begin{align}
    &\tilde{a}(x)=\phi(a(x))(\phi(x)^{\prime}_x)^2+\phi(c(x))(\phi(y)^{\prime}_x)^2 \label{eq:eq6}\\
    &\tilde{c}(x)=\phi(c(x))(\phi(y)^{\prime}_y )^2\label{eq:other.ctilde}\\
    &\phi(c(x))\phi(y)^{\prime}_x\phi(y)^{\prime}_y =0.\label{eq8}
  \end{align}
  Now, $\phi(c(x))\neq 0$ (since $\det(g)\neq 0)$ and
  $\phi(y)'_y\neq 0$ (since $\alpha\delta-\beta\gamma\neq 0$) which
  implies, by \eqref{eq8}, that
  \begin{center}
    $\phi(y)'_x = \Big(\frac{(\alpha x+\beta)(\gamma x+\delta)}{(\alpha\delta-\beta\gamma)x}y\Big)^{\prime}_x+r^{\prime}(x)=0$.
  \end{center}
  It follows that $r(x)=r_0\in\complex$ and 
  \begin{align*}
    \Big(\frac{(\alpha x+\beta)(\gamma x+\delta)}{(\alpha\delta-\beta\gamma)x}y\Big)^{\prime}_x&=0\implies
    \frac{(\alpha x+\beta)(\gamma x+\delta)}{(\alpha\delta-\beta\gamma)x}=\lambda\in\mathbb{C},
  \end{align*}
  yielding
  \begin{align*}
    (\alpha x+\beta)(\gamma x+\delta)&=\lambda x(\alpha\delta-\beta\gamma)
  \end{align*}
  and consequently
  \begin{center}
    $\alpha\gamma=0$, $\beta\delta=0$ and $\alpha\delta+\beta\gamma=\lambda (\alpha\delta-\beta\gamma)$.
  \end{center}
  If $\alpha=0$, then $\beta\neq0,\gamma\neq0$ (since
  $\alpha\delta-\beta\gamma\neq 0$) implying that $\delta=0$ and
  therefore, $\beta\gamma=-\lambda\beta\gamma$ which implies that
  $\lambda=-1$. Hence, the automorphism have to be of the form
  $\phi(x)=\frac{\beta}{\gamma x}$ and $\phi(y)=-y+r_0$. Using
  Proposition \ref{proposition2.2} we get
  \begin{align*}
    \begin{pmatrix}
      \tilde{a}(x) &0  \\
      0 & \tilde{c}(x)  \\ 
    \end{pmatrix}&= \begin{pmatrix}
      -\frac{\beta}{\gamma x} & 0 \\
      0 & -1 \\
    \end{pmatrix}\begin{pmatrix}
      \phi(a(x)) & 0  \\
      0 & \phi(c(x)) \\ 
    \end{pmatrix}\begin{pmatrix}
      -\frac{\beta}{\gamma x} & 0 \\
      0 & -1
    \end{pmatrix}\\&= \begin{pmatrix}
      -\frac{\beta}{\gamma x}\phi(a(x)) & 0 \\
      0& -\phi(c(x))\\
    \end{pmatrix}\begin{pmatrix}
      -\frac{\beta}{\gamma x} & 0 \\
      0 & -1
    \end{pmatrix}\\&=\begin{pmatrix}
      \phi(a(x))\para{-\frac{\beta}{\gamma x}}^2 & 0 \\
      0& \phi(c(x))\end{pmatrix}
  \end{align*}
  giving that
  \begin{align*}
    \tilde{a}(x)&=\phi(a(x))\para{-\frac{\beta}{\gamma x}}^2
                  =a(\phi(x))\para{-\frac{\beta}{\gamma x}}^2
                  =\para{-\frac{\beta}{\gamma x}}^2a\para{\frac{\beta}{\gamma x}}
  \end{align*} 
  However, the above form of the automorphism is not possible since we
  have assumed that $a(x),\tilde{a}(x)$ are non-zero polynomials (and
  not rational functions).
 
  If $\alpha\neq0$, then $\gamma=0,\delta\neq0$ implying that
  $\beta=0$ and $\alpha\delta=\lambda\alpha\delta$, which implies that
  $\lambda=1$.  Hence, the automorphism have to be of the form
  $\phi(x)=\frac{\alpha x}{\delta }=\tilde{\alpha}x$ and
  $\phi(y)=\lambda y=y+r_0$. Using Proposition \ref{proposition2.2} we
  get
  \begin{align*}
    \begin{pmatrix}
      \tilde{a}(x) &0  \\
      0 & \tilde{c}(x)  \\ 
    \end{pmatrix}
    &= \begin{pmatrix}
      \tilde{\alpha} & 0 \\
      0 & -1 \\
    \end{pmatrix}
    \begin{pmatrix}
      \phi(a(x)) & 0  \\
      0 & \phi(c(x)) \\ 
    \end{pmatrix}
    \begin{pmatrix}
      \tilde{\alpha} & 0 \\
      0 & -1
    \end{pmatrix}\\&
                   =\begin{pmatrix}
      \tilde{\alpha}^2\phi(a(x)) & 0 \\
      0& \phi(c(x))\end{pmatrix}
  \end{align*}
  giving that
  \begin{align*}
    \tilde{a}(x)&=\tilde{\alpha}^2\phi(a(x))=\tilde{\alpha}^2a(\phi(x))=\tilde{\alpha}^2a(\tilde{\alpha}x)\\
    \tilde{c}(x)&=\phi(c(x))=c(\phi(x))=c(\tilde{\alpha}x)
  \end{align*}
  which concludes the proof of the statement.
\end{proof}

\noindent Let us give another simple example of isomorphic K\"ahler-Poisson algebras.

\begin{example}
  Let $\mathcal{K}=(\mathcal{A}(x,y),g,\{x,y\})$ and
  $\tilde{\mathcal{K}}=(\mathcal{A}(x,y),\tilde{g},\{x,y\})$ be
  K\"ahler-Poisson algebras, with
  \begin{align*}
    g=\begin{pmatrix}
      a(x) & 0 \\
      0 & c(x) \\
    \end{pmatrix}\text{ and }
    \tilde{g}=\begin{pmatrix}
      \alpha^2a(\alpha x) &0 \\
      0 & c(\alpha x)\\ 
    \end{pmatrix} 
  \end{align*}
  for $a(x),c(x)\in\complex[x]$ and
  $\alpha\in\complex$. Proposition~\ref{prop:diagonal.g} shows that
  $\mathcal{K}\cong \tilde{\mathcal{K}}$.
  For instance, with $a(x)=x$, $c(x)=1+x+x^2$ and $\alpha=-2$ one finds that
  \begin{align*}
    g=\begin{pmatrix}
      x & 0 \\
      0 & 1+x+x^2 \\ 
    \end{pmatrix}\text{ and }
    \tilde{g}=
    \begin{pmatrix}
      -8x &0 \\
      0 & 1-2x+4x^2\\ 
    \end{pmatrix}
  \end{align*}
  give isomorphic K\"ahler-Poisson algebras.
\end{example}

\noindent
For general metrics, the situation becomes much more
complicated. However, let us finish by giving a sufficient condition
for diagonal metrics depending on $y$.
\begin{proposition}\label{prop:diagonal.y}
  Let $\mathcal{K}=(\mathcal{A}(x,y),g,\{x,y\})$ and $\tilde{\mathcal{K}}=(\mathcal{A}(x,y),\tilde{g},\{x,y\})$ be K\"ahler-Poisson algebras, with
  \begin{center}
    $g(y)=\begin{pmatrix}
      a(y) & 0  \\
      0 & c(y) \\ 
    \end{pmatrix}$ and $\tilde{g}(y)=\begin{pmatrix}
      \tilde{a}(y) &0  \\
      0 & \tilde{c}(y) \\ 
    \end{pmatrix}$
  \end{center}
  for $a(y),c(y),\tilde{a}(y),\tilde{c}(y)\in \mathbb{C}[x]$. If there exists $\alpha,\lambda\in\mathbb{C}$ such that
  \begin{enumerate}
  \item $\tilde{a}(y)=\alpha^2a(y+\lambda)$
  \item $\tilde{c}(y)=c(y+\lambda)$ 
  \end{enumerate}
  then $\mathcal{K}\cong \tilde{\mathcal{K}}$.
\end{proposition}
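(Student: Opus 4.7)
The plan is to apply Proposition~\ref{proposition2.2} by exhibiting a Poisson algebra automorphism $\phi:\mathcal{A}(x,y)\to\mathcal{A}(x,y)$ for which the matrix identity $\tilde{g}=A^T\phi(g)A$ reduces, under the hypotheses (1)--(2), to a trivial verification. The natural candidate is the automorphism that rescales $x$ and translates $y$, namely
\begin{align*}
  \phi(x)=\alpha x,\qquad \phi(y)=y+\lambda,
\end{align*}
which is a special case of the type (A) automorphism in Lemma~\ref{lemma1} (with $\alpha_1=\alpha$, $\alpha_2=0$, $\gamma_2=\lambda$), so in particular it is a Poisson algebra automorphism of $\mathcal{A}(x,y)$.

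Next I would compute the Jacobian matrix $A^i{}_\beta=\partial \phi(x^i)/\partial y^\beta$, which in this case is simply the diagonal matrix $A=\operatorname{diag}(\alpha,1)$. Because $g$ depends only on $y$, applying $\phi$ to its entries amounts to the substitution $y\mapsto y+\lambda$, so
\begin{align*}
  \phi(g)=\begin{pmatrix} a(y+\lambda) & 0 \\ 0 & c(y+\lambda) \end{pmatrix},
\end{align*}
and a direct multiplication gives $A^T\phi(g)A=\operatorname{diag}\!\big(\alpha^2 a(y+\lambda),\,c(y+\lambda)\big)$. Under hypotheses (1) and (2) this is exactly $\tilde{g}$.

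It remains to check the condition $\phi(\mathcal{A}_{\text{fin}})=\mathcal{A}'_{\text{fin}}$ appearing in Proposition~\ref{proposition2.2}; since $\mathcal{A}_{\text{fin}}=\complex[x,y]$ and $\phi$ maps $x$ and $y$ into $\complex[x,y]$ with polynomial inverse $\phi^{-1}(x)=x/\alpha$, $\phi^{-1}(y)=y-\lambda$ (noting $\alpha\neq 0$, which is forced by $\det(\tilde{g})\neq 0$ together with (1)), this holds automatically. Invoking Proposition~\ref{proposition2.2} then yields $\mathcal{K}\cong\tilde{\mathcal{K}}$. There is no real obstacle here beyond choosing the correct automorphism; the proof is essentially a one-line matrix computation once $\phi$ is identified, and is considerably simpler than the converse would be (the converse is not claimed in the statement, reflecting the fact that for general $y$-dependent metrics one cannot rule out other automorphism types contributing to isomorphisms).
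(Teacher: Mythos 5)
Your proof is correct and follows essentially the same route as the paper: the paper also takes the automorphism $\phi(x)=\alpha x$, $\phi(y)=y+p(x)$ with the constant polynomial $p(x)=\lambda$, computes $A=\operatorname{diag}(\alpha,1)$, and verifies $\tilde{g}=A^T\phi(g)A$ directly before invoking Proposition~\ref{proposition2.2}. Your additional remarks (the $\mathcal{A}_{\text{fin}}$ condition and the observation that $\alpha\neq 0$) are sound and only make the argument slightly more explicit than the paper's version.
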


\begin{proof}
  Assume $\tilde{a}(y)=\alpha^2a(y+\lambda)$ and
  $\tilde{c}(y)=c(y+\lambda)$. To prove that
  $\mathcal{K}\cong \tilde{\mathcal{K}}$, we will use
  Proposition~\ref{proposition2.2} and show that
  \begin{align}\label{eq9}
    \tilde{g}=A^T\phi(g)A,
  \end{align}
  for an automorphism of the type $\phi(x)=\alpha x$ and
  $\phi(y)=y+p(x)$.  Let $p(x)=\lambda$, then $p^{\prime}(x)=0$ and we
  compute
  \begin{center}
    $A=\begin{pmatrix}
      \alpha & 0  \\
      p^{\prime}(x) & 1 \\ 
    \end{pmatrix}=\begin{pmatrix}
      \alpha & 0  \\
      0  & 1 \\ 
    \end{pmatrix}$
  \end{center}
  giving \eqref{eq9} as
  \begin{align*}
\begin{pmatrix}
\tilde{a}(y) &0  \\
0 & \tilde{c}(y) \\ 
\end{pmatrix}&=\begin{pmatrix}
\alpha &0  \\
0 & 1 \\ 
\end{pmatrix}\begin{pmatrix}
\phi(a(y)) & 0  \\
0 & \phi(c(y)) \\ 
\end{pmatrix}\begin{pmatrix}
\alpha & 0  \\
0 & 1 \\ 
\end{pmatrix}\\&
              = \begin{pmatrix}
\alpha a( y+\lambda) & 0  \\
0 &c(y+\lambda) \\ 
\end{pmatrix}\begin{pmatrix}
\alpha & 0  \\
0 & 1 \\ 
\end{pmatrix}=\begin{pmatrix}
\alpha^2 a(y+\lambda) & 0  \\
0 & c(y+\lambda) \\
\end{pmatrix}
  \end{align*}
  which is true by assumption. From Proposition~\ref{proposition2.2}
  we conclude that $\mathcal{K}\cong \tilde{\mathcal{K}}$.
\end{proof}

\noindent
For instance, with $a(y)=y$, $c(y)=1+y^2$ , $\lambda=2$ and $\alpha=1$ finds that
\begin{center}
  $g=\begin{pmatrix}
    y & 0 \\
    0 & 1+y^2 \\ 
  \end{pmatrix}$ and $\tilde{g}=\begin{pmatrix}
    y+2 &0 \\
    0 & 5+y^2+4y\\ 
  \end{pmatrix}$
\end{center}
give rise to isomorphic K\"ahler-Poisson algebras.
     
\section{Summary}

\noindent
In this paper, we have started to investigate isomorphism classes of
K\"ahler-Poisson algebras for rational functions in two
variables. Although a complete classification has not been obtained
there are several subclasses of metrics which can be explicitly
described. Proposition~\ref{prop:constant.metric} shows that the class
of constant metrics can be parametrized by one (complex) parameter,
and Proposition~\ref{prop:diagonal.g} describes the isomorphism
classes of diagonal metrics only depending on $x$. Furthermore,
several sufficient conditions are derived
(Proposition~\ref{proposition x} and
Proposition~\ref{prop:diagonal.y}) and a number of examples are given
illustrating that seemingly different metrics may give rise to
isomorphic K\"ahler-Poisson algebras.

\end{document}